\documentclass [11pt,twoside,a4paper]{article}
\usepackage{amsfonts}
\usepackage{amsthm}
\usepackage{amsmath}
\usepackage{amscd}
\usepackage{amsbsy}            %
\usepackage{psfrag}            %
\usepackage{epsf}              %
\usepackage{graphicx}          %
\usepackage{makeidx}           %
\usepackage{color}             %
\usepackage{fancyhdr}

\setlength{\arraycolsep}{0.5mm}
\newtheorem{thm}{Theorem}[section]
\newtheorem{exm}[thm]{Example}

\newtheorem{lem}[thm]{Lemma}

\newtheorem{prob}[thm]{Problem}
\newtheorem{prop}[thm]{Proposition}
\newtheorem{cor}[thm]{Corollary}


\newcommand{\rank}{\texttt{rank}}
\newcommand{\abs}[1]{\left\vert#1\right\vert}
\newcommand{\norm}[1]{\parallel\! #1\! \parallel}

\newcommand{\set}[1]{\left\{#1\right\}}
\newcommand{\tA}{\tilde{\mathcal{\A}}}
\newcommand{\cprk}{\textit{cprank}}
\newcommand{\Gram}{\texttt{Gram}}
\newcommand{\supp}{\texttt{supp}}
\newcommand{\al}{\alpha}
\newcommand{\allone}{\ell}
\newcommand{\be}{\beta}

\newcommand{\Gam}{\gamma}

\newcommand{\la}{\lambda}
\newcommand{\La}{\Lambda}

\newcommand{\eps}{\epsilon}
\newcommand{\si}{\sigma}
\newcommand{\pdg}{multi-hypergraph}

\newcommand{\A}{\mathcal{A}}
\newcommand{\B}{\mathcal{B}}
\newcommand{\C}{\mathcal{C}}
\newcommand{\D}{\mathcal{D}}

\newcommand{\G}{\mathcal{G}}
\newcommand{\cI}{\mathcal{I}}
\newcommand{\J}{\mathcal{J}}

\newcommand{\bcp}{\set{0,1}\!-\!CP }

\newcommand{\cp}{\mathcal{C\!P}}

\newcommand{\cP}{\mathcal{P}}

\newcommand{\T}{\mathcal{T}}
\newcommand{\bu}{\textbf{u}}

\newcommand{\bx}{\textbf{x}}

\newcommand{\bfe}{\textbf{e}}

\def\CP{\mathbb{C\!P}}
\def\capEb{\mathbb{E}}
\def\Fb{\mathbb{F}}

\def\RR{\mathbb R}
\def\RPlus{RR_{+}}
\def\ZZ{\mathbb Z}
\newcommand{\Zplus}{\ZZ_{+}}

\def\SS{{\mathbb S}}

\newcommand{\SF}{\mathbb{S\!F}}
\newcommand{\ST}{\mathbb{S\!T}}

\def\dsum{\displaystyle\sum}


\newcommand{\mnrt}{$m$th order $n$-dimensional real tensor }
\newcommand{\mnrts}{$m$th order $n$-dimensional real tensors }
\newcommand{\mnst}{$m$th order $n$-dimensional symmetric tensor }
\newcommand{\mnsts}{$m$th order $n$-dimensional symmetric tensors }

\newcommand{\beq}{\begin{equation}}
\newcommand{\eeq}{\end{equation}}
\newcommand{\bey}{\begin{eqnarray}}
\newcommand{\eey}{\end{eqnarray}}
\newcommand{\beyy}{\begin{eqnarray*}}
\newcommand{\eeyy}{\end{eqnarray*}}

\title{On $\set{0,1}$ CP Tensors and CP Multi-hypergraphs}
\author{Changqing Xu\thanks{School of Mathematics and Physics, Suzhou University of Science and Technology, Suzhou, P.R. China. Email: cqxurichard@mail.usts.edu.cn.},
Zhibing Chen\thanks{College of Mathematics and Computational Sciences, Shenzhen University, Shenzhen, P. R. China. Email: zbchen@szu.edu.cn}, and
Liqun Qi\thanks{Department of Applied Mathematics, The Hong Kong Polytechnic University, Hung Hum, Hong Kong. Email: liqun.qi@polyu.edu.hk}}

  \makeatletter
      \def\@setcopyright{}
      \def\serieslogo@{}
      \makeatother

\begin{document}

\maketitle

\begin{abstract}
A doubly nonnegative matrix can be written as a Gramian matrix, and a completely positive matrix can therefore be written as a Gramian matrix of some nonnegative vectors.
 In this paper, we introduce Gramian tensors and study 2-dimension completely positive tensors and $\bcp$\  tensors.  Also investigated are the complete positive
 multi-hypergraph, a generalized form of a cp graph. We also provide a necessary and sufficient condition for a 2-dimensional tensor to be completely positive.
\end{abstract}

\noindent \textbf{keywords:} \  Completely positive tensor; $\{0,1\}$ completely positive tensor; multi-hypergraph ;  $(0,1)$ tensor.\\
\noindent \textbf {AMS Subject Classification}: \   53A45, 15A69.  \\


\section{Introduction}
\setcounter{equation}{0}

\indent Completely positive (cp) matrices have been investigated since 1960s\cite{D1962,Hall1963, GW1980, BM2003, Xu2004}, and have been applied in many fields such as
computer vision\cite{SH2005, SH2006}, exploratory multiway data clustering\cite{LD2006},  inequalities\cite{D1962}, quadratic forms\cite{Hall1963}, combinatorial designs
\cite{BM2003} and optimizations\cite{BAD2009, DA2013, AKK2013}.  They are also applied to statistical models\cite{GW1980}. \\
\indent  In 1993, Kogan and Berman use graph theory to character the cp matrices\cite{KB1993}. Meanwhile Salce and Zanardo\cite{SZ1993} use cp matrices to investigate 
the positivity of least squares solutions.  The study on complete positivity reached its peak in the later of 1980s when Berman and Ando etc. began their systematical study on 
cp problems\cite{BM2003}.  In 2004 the first author\cite{Xu2004} presented a sufficient and necessary condition for a square matrix to be cp by the theory of convex cone, and then 
in 2005, together with Berman, he introduced $(0,1)-cp$ matrices, uniform cp matrices and minimal $(0,1)-cp$ matrices\cite{BX2005,BX2007}. Recently cp matrices have been found
useful in pattern recognitions \cite{HPS2005, LD2006} and polynomial optimizations\cite{AKK2013, BAD2009, PVZ2015}. \\
\indent The high order completely positive tensors were introduced by Qi\cite{Qi2013} in 2013.  In fact,  the third-order cp tensor appeared in 2005 as a symmetric tensor possessing
a nonnegative symmetric rank-1 decomposition (also called a cp decomposition in the matrix case),  has been studied in 2005 (in the name of multiway array) by Hazan, Polak and 
Shashua \cite{HPS2005, SH2005,SH2006} where an algorithm is presented for establishing a completely positive decomposition and is applied to image analysis and multiway 
clustering. \\
\indent  A cp tensor is a symmetric tensor, which possesses a symmetric rank-one decomposition\cite{K2009, Qi2013} where each rank-one tensor is nonnegative.  The determination 
of a cp tensor is a NP-hard problem, though, there are some situations when a feasible algorithm possibly exists to settle it\cite{QXX2014}.  Two kinds of positive(nonnegative) tensors
 closely related to cp tensors are \emph{doubly nonnegative tensors}\cite{LQ2016} and \emph{copositive tensors} \cite{Qi2013}.  Like the relationship of a cp matrix and its associated
 graph,  a cp tensor is closely related to a hypergraph\cite{Pe2010, PZ2013} or a multi-hypergraph whose edge-set allows multi-subsets of its vertex-set \cite{PZ2014, XLQC2016}. \\

 \indent  In this paper, we first investigate the cp tensors by introducing the Gramian tensors, and then focus on the even order two-dimensional cp tensors. Then we investigate the 
 $\bcp$\ tensors and the associated cp multi-hypergraphs. \\
 
 \indent  A \emph{doubly nonnegative} (dnn) matrix is both entrywise nonnegative and positive semidefinite (psd).   We denote the set of all dnn matrices of order $n$ by $DNN_{n}$.  
 A matrix $A\in DNN_{n}$ is called \emph{completely positive} (\emph{cp}) if  there exists a nonnegative matrix $W\in \RR^{n\times d}$ for some positive integer $d$ such that
\beq\label{eq: 2ordercp}
A=WW^{\top}
\eeq
The smallest possible number $d$, denoted $cprank(A)$,  is called the \emph{cprank} of  $A$.  $A$ is called $\bcp$ if  $W$ is a $(0,1)$-matrix. The binary cprank of $A$ is accordingly
defined when $W$ is a (0,1) matrix.  (\ref{eq: 2ordercp})  is called a \emph{cp decomposition} of $A$.  A cp matrix may possess many cp decompositions.  Obviously all cp matrices are 
dnn (i.e., $\cp_{n}\subseteq DNN_{n}$  for all $n$) by definition, and $DNN_{n}=\cp_{n}$ for all $n\leq 4$ (this is not obvious).  The inclusion $\cp_{n}\subset DNN_{n}$ becomes 
proper when $n\geq 5$. For more detail on $\cp$ matrices, we refer to \cite{BM2003}. \\

\indent  An $n\times n$ nonnegative matrix $A$ is associated with a graph $G(A) = ([1\ldots n], E)$ such that a pair $(i,j)\in E$ if $a_{ij}$ or $a_{ji}$ is nonzero.  A square real matrix $A$ is
said to be a \emph{realization} of  a graph $G$ if  $G$ is isometric to $G(A)$.  $A$ is called a \emph{dnn} (resp. \emph{cp} and \emph{psd} etc.) \emph{realization} of $G$ if  $A$ is
a dnn (resp. cp  and psd, etc.) matrix,  and also $G(A)=G$.  A graph $G$ is called a \emph{cp graph} if  each of its dnn realizations is a cp matrix. Berman and Hershkowtiz show that a
graph $G$ is cp if and only if  $G$ contains no odd cycle of length greater than 3.  A direct corollary of this result is that a dnn matrix whose associated graph contains no long odd cycle is
cp.  This result is employed to classify $5\times 5$ cp matrices in \cite{Xu2005}.  The problem of determining the complete positivity of a given dnn matrix of order large than four
still remains open \cite{BAD2009,DA2013,SZ1993, BM2003}. The extension of cp matrices to cp tensors is dated back to 2006 when Shuasha and Hazen\cite{SH2005} present an algorithm for nonnegative tensor factorizations and use it to the image analysis. A formal definition for high order completely positive tensor is introduced by Qi in \cite{Qi2013}. \\

\indent We denote $[m\ldots n]:=\set{m,m+1,\ldots, n}$ for any integers $m,n$ satisfying $0\le m\le n$, and $\abs{S}$ for the cardinality of  set (or multiset) $S$, 
$\Zplus^{n}$ for the set of nonnegative integral vectors of dimension $n$, and $\Fb^{n}$ (resp. $\Fb^{n\times n}$) the set of  all (0,1) vectors of dimension $n$  
(matrices of $n\times n$) with $\Fb:=\set{0,1}$.  We also use $\RR^{n}$ to denote the set of real $n$-dimensional vectors and $\RPlus^n$ the set of all nonnegative 
vectors in $\RR^n$.  By $\supp(\bx)$ we mean the support of a vector $\bx$, i.e., the index set of nonzero coordinates of $\bx$.  We use $\T_{m;n}$ to denote the set 
of all \mnrts.  A tensor $\A=(A_{i_{1}\ldots i_{m}})\in \T_{m;n}$ is called \emph{symmetric} if  the values of its entries does not alter under any permutation of its subscripts.
We denote $\SS_{m;n}$ for the set of all \mnsts, $\Fb_{m;n}$ for the set of all $m$th order $n$ dimensional (0,1) tensors, and  $\SF_{m;n}$ the set of  all symmetric tensors in
$\Fb_{m;n}$.  Inherited from \cite{QXX2014}, we write
\[ S(m,n):=\set{\tau=(i_1,i_{2},\ldots,i_m):   i_1,i_{2},\ldots ,i_m\in [1\ldots n]}\]
for the index set of  an element of an $m$-order tensor, and denote
\[ S(k; m,n):=\set{(i_1,\ldots,i_m)\in S(m,n):   i_1+i_{2}+\ldots+i_m=m+k }\]
for $k\in [0\ldots N]$ where $N:=m(n-1)$. An element $\si\in S(m,n)$ is sometimes identified with an $m$-tuple or $m$-multiset or an $m$-permutation chosen 
from $[1\ldots n]$ with displacement allowed. \\

\indent  Let $A=(A_{\si})$ be an \mnst where $\si\in S(m,n)$ and $A_{\si}$ denotes an entry of $\A$ indexed by $\si$.  $\A$ corresponds to an $m$-degree homogeneous
polynomial
\beq\label{eq: mpoly}
f_{\A}(\bx)\equiv \sum\limits_{j=1}A_{i_{1}\ldots i_{m}}x_{i_{1}}\ldots x_{i_{m}}
\eeq
$\A$ is called a \emph{completely positive} or simply a \emph{cp} tensor if  $f_{\A}(\bx)$ can be decomposed as
\beq\label{eq: cp03}
f_{\A}(\bx) = \dsum\limits_{j=1}^{K}(\be_{j}^{\top}\bx)^{m}
\eeq
with $\be_{j}\in \RPlus^{n}$.  If we write $B=[\be_{1},\ldots, \be_{K}]\in \RPlus^{n\times K}$, then  (\ref{eq: cp03}) is equivalent to a decomposition of form
\beq\label{eq: cp}
A=\sum\limits_{j=1}^{K} \be_{j}^{m}, \quad  \be_{j}\in \RPlus^{n}
\eeq
for $\A$, where the smallest possible number $K$ is called the \cprk of $\A$, and is denoted $\cprk(\A)$.  A tensor $\A\in \T_{m;n}$ is called a $\bcp$ tensor
if  $\A$ has a decomposition (\ref{eq: cp}) with $\be_{j}\in \Fb^{n}$, and the corresponding smallest number $K$ is called the \emph{binary cprank} of $\A$,
which is denoted $\cprk_{b}(\A)$.  The following lemma will be used to prove one of the main results (the proof is omitted since it is obvious).\\
\begin{lem}\label{le1:bcprk4diag}
\begin{description}
\item[(1) ]  Let $\D=diag(d_1, \ldots,d_n) \SF_{m;n}$ be a nonnegative integral diagonal tensor.  Then $\D$ is a $\bcp$\ tensor with $\cprk_b(\A)=d_1+d_2+\ldots +d_n$.
\item[(2) ]  Let $\J =\bfe^m$ be the all-ones tensor where $\bfe$ is the all-ones vector of dimension $n$.  Then $\J$ is $\bcp$  with $\cprk_b(\A)=1$.
\end{description}
\end{lem}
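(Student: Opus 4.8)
The plan is to treat the two parts separately, disposing of (2) immediately and reserving the real work for the lower bound in (1). For part (2), I would simply observe that $\J=\bfe^m$ is already a single rank-one term whose generating vector $\bfe$ lies in $\Fb^n$; this is a valid $\bcp$ decomposition with one term, so $\cprk_b(\J)\le 1$, and since $\J$ is nonzero the empty decomposition is excluded, giving $\cprk_b(\J)=1$.

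For part (1) the upper bound is equally direct. Writing $\bu_i\in\Fb^n$ for the $i$th standard unit vector, the only nonzero entries of $\D$ are $D_{i\ldots i}=d_i$, so
\[
\D=\sum_{i=1}^n d_i\,\bu_i^m,
\]
and because each $d_i$ is a nonnegative integer this is literally a sum of $d_1+\cdots+d_n$ binary rank-one tensors. Hence $\cprk_b(\D)\le d_1+\cdots+d_n$.

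The substance of the lemma is the matching lower bound, and this is the one step I expect to require a genuine argument. I would suppose $\D=\sum_{j=1}^K\be_j^m$ with each $\be_j\in\Fb^n$; discarding any zero vectors (which contribute nothing) I may assume every $\be_j\neq 0$, and it then suffices to show $K=d_1+\cdots+d_n$. The key move is to read off an off-diagonal entry: fixing distinct indices $i\neq k$ and evaluating the entry of $\D$ indexed by $\si=(i,\ldots,i,k)$ (with $m-1$ copies of $i$, so $m\ge 2$), diagonality forces this entry to vanish, while the decomposition gives
\[
0=\sum_{j=1}^K (\be_j)_i^{\,m-1}(\be_j)_k=\sum_{j=1}^K (\be_j)_i(\be_j)_k,
\]
the second equality using $(\be_j)_i\in\set{0,1}$. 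Every summand is nonnegative, so each vanishes: $(\be_j)_i(\be_j)_k=0$ for all $j$ and all $i\neq k$. Thus no $\be_j$ can carry two nonzero coordinates, and being nonzero each $\be_j$ equals a single unit vector $\bu_{c(j)}$. Reading the diagonal entries then yields $d_i=\sum_j(\be_j)_i=\abs{\set{j:c(j)=i}}$, and summing over $i$ gives $K=\sum_i d_i$. Since an arbitrary decomposition reduces (by deleting zeros) to one of this form, we get $K\ge d_1+\cdots+d_n$ in every case, which together with the upper bound proves $\cprk_b(\D)=d_1+\cdots+d_n$. The only place any care is needed is the reduction to singleton supports; everything after that is bookkeeping.
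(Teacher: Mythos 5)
Your proof is correct, and it fills in exactly the argument the paper leaves out: the authors state this lemma with the remark that ``the proof is omitted since it is obvious,'' so there is no proof in the paper to compare against. Your treatment --- the explicit decomposition $\D=\sum_i d_i\,\bu_i^m$ for the upper bound, and the observation that a vanishing off-diagonal entry $D_{i\ldots ik}=0$ forces every $\be_j$ in a binary decomposition to have singleton support, whence the term count is exactly $\sum_i d_i$ --- is the natural argument the authors presumably had in mind, with the lower bound (the only nontrivial step) worked out carefully.
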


\indent  Let $r\in [1\ldots n]$.  A $\bcp$\  tensor $\A$ is called  \emph{$r$-uniform} provided that $\A$ has a decomposition (\ref{eq: cp}) with
$\abs{\supp(\al_{j})}=r$ for all $j\in [1\ldots K]$.  A cp ($\bcp$) tensor is called \emph{minimal cp} (minimal $\bcp$\ ) if it becomes non-cp (non-$\bcp$\ ) 
when any of its diagonal elements is decreased. The minimal cp tensor and uniform cp tensor are both inherited from the matrix case \cite{BX2005}. \\

\indent  Given $\Gam \equiv \set{\al_{1}, \al_{2}, \ldots, \al_{n}}\subset \RR^{r}$ where
\[
\al_{j}=(a_{1j},a_{2j},\ldots,a_{nj})^{\top}.
\]
For any $\si=(i_{1},i_{2},\ldots, i_{m})\in S(m,n)$ where $1\le m\le n$, we denote $\Gam_{\si} \equiv \set{\al_{i_{1}}, \al_{i_{2}}, \ldots, \al_{i_{m}}}$.
The \emph{Hadamard product} of  $\Gam_{\si}$ is vector $\al_{\si}: = \al_{i_{1}}\odot \ldots \odot \al_{i_{m}}\in \RR^{r}$ whose $k$th
coordinate equals $a_{i_{1}k}a_{i_{2}k}\ldots a_{i_{m}k}$ for $k\in [r]$.  The \emph{$m$-inner product} of  $\Gam_{\si}$, denoted
$\La_{\si}=(\al_{i_{1}}, \ldots, \al_{i_{m}})$, is the sum of all coordinates of $\al_{si}$,  i.e.,
\beq\label{eq:definprod}
\La_{\si} = \sum\limits_{i=1}^{n}(\prod_{j=1}^{m} a_{ij})
\eeq
When $\al=\al_{i_{1}}=\ldots =\al_{i_{m}}$,  (\cite{eq:definprod}) is called the $m$-inner product of $\al$.  An $m$-norm of a vector $\al$ is accordingly
defined as
\[ \norm{\al}_{m} := (\al, \ldots, \al)^{1/m}\]
where $(\al, \ldots, \al)$ is the $m$-inner product of $\al$ .  A tensor $\A\in \T_{m;n}$ is called an $m$-order \emph{Gramian tensor} generated by vectors
$\set{\al_{j}}_{j=1}^{n}\subset \RR^{d}$ if it satisfies
\beq\label{eq:gramtensor}
A_{i_{1}i_{2}\ldots i_{m}} = (\al_{i_{1}}, \ldots, \al_{i_{m}}), \quad  \forall \tau:=(i_{1},i_{2},\ldots,i_{m})\in S(m, n)
\eeq
Denote $B:=[\al_1,\al_2,\ldots, \al_n]$. The matrix $B\in \RR^{d\times n}$ is called the \emph{adjacency matrix} of $\A$. \\
\indent  Given any real matrix $B\in \RR^{d\times n}$ and any positive integer $m>1$, we can generate an $m$-order Gramian tensor by the column vectors
of $B$.  For our convenience, we call such a tensor an $m-$Gramian tensor of $B$, and denote it by $\A=\Gram^{(m)}(B)$.  A second order Gramian tensor  $\A$
of matrix $B$ is just a Gramian matrix, i.e.,  $\A=B^{\top}B$. Moreover,  a cp matrix is a Gramian matrix of a nonnegative matrix. \par

\begin{exm}\label{exm: 01}
Let $\D=(D_{i_1 i_2 \ldots i_m})$ be a diagonal tensor of $m$-order $n$-dimension, i.e.,
\[ D_{\si} =\la_{\bar{\si}} \delta_{\si},  \quad  \forall  \si=(i_1, i_2, \ldots, i_m)\in S(m,n)  \]
where $\bar{\si}=(i_1+i_2+\ldots +i_m)/m, \la_{j}\ge 0$ for each $j\in [1\ldots n]$ and $\delta_{i_1 i_2 \ldots i_m}$ is the Kroneck number.  Denote
$D= diag(d_{1}, d_2, \ldots, d_{n})$ with $d_{j}=\la_{j}^{1/m}$ for $j\in [1\ldots n]$.  Then $\D = \Gram^{(m)} (D)$.  Then $\D$ is a completely positive
tensor since $D$ is a nonnegative matrix.  Note that $\cprk(\D)$ is exactly the number of nonzero $\la_j$s.
\end{exm}

\vskip 2mm

\section{Completely Positive Tensors  and $\bcp$  tensors}
\setcounter{equation}{0}

\indent  Let $n>1$ be an integer and $r\in [1\ldots n]$.  An $n\times n$ positive semidefinite (PSD) matrix $A$ of rank $r$ can always be written as a Gramian matrix, i.e.,
$A=\Gram(\al_{1},\ldots, \al_{n})$ for some linearly independent vectors $\al_{1},\ldots, \al_{n}\in \RR^r$.  We sometimes denote $A=\Gram(B)$ where
$B=[\al_1, \ldots, \al_n]\in \RR^{r\times n}$ with $\rank(B)=r$.  Thus a square matrix is cp if and only if it is a Gramian matrix of some nonnegative vectors.
It is shown that the complete positivity in the matrix case is equivalent to double nonnegativity for $n\in [1\ldots 4]$.  This is also conjectured to be true for the case of
high even order tensors.  For this purpose, we consider an even order 2-dimensional doubly nonnegative tensor in this section. As to our knowledge, this kind of tensors 
are very useful in quantum computation. \\
\indent In order to study Gramian tensors and the doubly nonnegative tensors, we recall the H\"{o}lder inequality, which can be restated by the language of $m$-inner
product as

\begin{lem}\label{le: holder}
Let $\al_{1}, \ldots, \al_{m}\in \RPlus^{n}$.  Then
\beq\label{eq:holder}
(\al_{1}, \ldots, \al_{m})^{m} \le \prod_{j=1}^{m} (\overbrace{\al_{j}, \ldots, \al_{j}}^{m})
\eeq
Furthermore, the equality in (\ref{le: holder}) holds when we have
\beq\label{eq:eq4mhold}
\rank(\set{\al_{1},\al_{2},\ldots, \al_{m}})=1
\eeq
\end{lem}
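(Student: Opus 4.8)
The plan is to recognize inequality (\ref{eq:holder}) as the classical H\"older inequality specialized to the conjugate exponents $p_{1}=\cdots=p_{m}=m$ (which satisfy $\sum_{j=1}^{m}1/p_{j}=1$), and to give a short self-contained derivation from the arithmetic--geometric--mean (AM--GM) inequality rather than invoking the general H\"older inequality as a black box. Writing the $k$th coordinate of $\al_{j}$ as $a_{kj}\ge 0$, the $m$-inner product unwinds to $(\al_{1},\ldots,\al_{m})=\sum_{k=1}^{n}\prod_{j=1}^{m}a_{kj}$ and $(\al_{j},\ldots,\al_{j})=\sum_{k=1}^{n}a_{kj}^{m}=\norm{\al_{j}}_{m}^{m}$, so (\ref{eq:holder}) is exactly
\[
\left(\sum_{k=1}^{n}\prod_{j=1}^{m}a_{kj}\right)^{m}\le \prod_{j=1}^{m}\norm{\al_{j}}_{m}^{m}.
\]

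First I would dispose of the degenerate case: if some $\al_{j}=0$ then $\norm{\al_{j}}_{m}=0$, so the right-hand side vanishes, and the left-hand side vanishes as well since every term of $(\al_{1},\ldots,\al_{m})$ carries the factor $a_{kj}=0$; hence the inequality holds with equality. I may therefore assume every $\norm{\al_{j}}_{m}>0$ and normalize by setting $b_{kj}=a_{kj}/\norm{\al_{j}}_{m}$, so that $\sum_{k=1}^{n}b_{kj}^{m}=1$ for each $j$. Dividing the target inequality by $\prod_{j=1}^{m}\norm{\al_{j}}_{m}^{m}$ shows it suffices to prove the normalized bound $\sum_{k}\prod_{j}b_{kj}\le 1$.

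The core step is a coordinatewise application of AM--GM. For each fixed $k$, the equal-weight AM--GM inequality gives $\prod_{j=1}^{m}b_{kj}=\prod_{j=1}^{m}\bigl(b_{kj}^{m}\bigr)^{1/m}\le \frac{1}{m}\sum_{j=1}^{m}b_{kj}^{m}$. Summing over $k$ and interchanging the two finite sums yields $\sum_{k}\prod_{j}b_{kj}\le \frac{1}{m}\sum_{j}\sum_{k}b_{kj}^{m}=\frac{1}{m}\sum_{j}1=1$, which is precisely the normalized bound; undoing the normalization and raising to the $m$th power recovers (\ref{eq:holder}).

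For the equality assertion I would simply substitute the rank-one hypothesis (\ref{eq:eq4mhold}): if $\al_{j}=c_{j}\bv$ with $c_{j}\ge 0$ and a common vector $\bv$, a direct computation shows both sides of (\ref{eq:holder}) equal $\bigl(\prod_{j=1}^{m}c_{j}\bigr)^{m}\norm{\bv}_{m}^{m^{2}}$, so equality holds. I do not expect a serious obstacle here, since the statement asserts only sufficiency; the only points needing care are handling the $\al_{j}=0$ case inside the normalization step and making the equal-weight AM--GM reduction explicit, so that the argument stands on its own rather than merely quoting H\"older. (One could alternatively prove the $m$-fold inequality by induction on $m$ from the two-term H\"older inequality, but the AM--GM route above is shorter and keeps the equality analysis transparent, since equality in the AM--GM step forces the normalized columns $\{b_{kj}\}_{j}$ to be constant in $j$, i.e.\ the $\al_{j}$ to be proportional.)
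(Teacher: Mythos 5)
Your proof is correct. Note that the paper itself offers no proof of this lemma: it is simply ``recalled'' as a restatement of the classical H\"older inequality in the language of $m$-inner products, so there is no argument in the text to compare against. Your AM--GM route is the standard self-contained derivation of exactly this case of H\"older (conjugate exponents all equal to $m$): the reduction to the normalized bound $\sum_{k}\prod_{j}b_{kj}\le 1$ after disposing of the case $\norm{\al_{j}}_{m}=0$, the coordinatewise inequality $\prod_{j}b_{kj}\le \frac{1}{m}\sum_{j}b_{kj}^{m}$, and the direct substitution $\al_{j}=c_{j}\bv$ for the equality claim are all sound, and the equality claim is indeed only asserted as a sufficient condition, which is all you prove. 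One small caution about your closing parenthetical: the assertion that equality forces the $\al_{j}$ to be proportional is valid only after the normalization step, i.e.\ under the standing assumption that every $\al_{j}\neq 0$; if some $\al_{j}=0$ both sides vanish and the remaining vectors need not be proportional, which is consistent with the paper's remark immediately after the lemma that equality in (\ref{eq:holder}) does not in general imply (\ref{eq:eq4mhold}). Since that parenthetical is an aside and not part of what the lemma claims, it does not affect the correctness of your proof.
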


\indent Note that the equality in (\ref{eq:holder}) does not necessarily imply (\ref{eq:eq4mhold}) for $m>2$ unless $rank(\A)=1$, which has been treated in \cite{XLQC2016}.
The following theorem presents a necessary and sufficient condition for an even order tensor to be doubly nonnegative.\\

\begin{thm}\label{th:dnntensor}
 Let  $\A\in \T_{m;n}$ with $m$ an even number. Then  $\A$ is dnn if and only if $\A$ is a Gramian tensor.  Furthermore, if $\A=\Gram^{(m)}(\al_1,\ldots, \al_n)$
 with $\al_j\in \RR^K$, then $\rank(\A)=K$ where $K$ is the smallest possible number.
\end{thm}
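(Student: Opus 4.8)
The plan is to realize the even order $m=2p$ through the square matricization (flattening) $M(\A)\in\RR^{n^p\times n^p}$ of $\A$, defined by $M(\A)_{(i_1\ldots i_p),(j_1\ldots j_p)}=A_{i_1\ldots i_p j_1\ldots j_p}$, and to transport the matrix Gram/spectral theory to $\A$ through this device; the even-order positive-semidefiniteness underlying the dnn condition is captured by $M(\A)\succeq 0$. The guiding identity is that a rank-one symmetric tensor flattens to a rank-one symmetric matrix: for any $\bu\in\RR^n$ one has $M(\bu^m)=(\bu^{\otimes p})(\bu^{\otimes p})^{\top}$. Hence if $\A=\Gram^{(m)}(B)=\sum_{k=1}^{K}\bu_k^m$, where $\bu_1,\ldots,\bu_K$ are the rows of $B$, then $M(\A)=\sum_{k=1}^{K}(\bu_k^{\otimes p})(\bu_k^{\otimes p})^{\top}\succeq 0$; combined with $f_{\A}(\bx)=\sum_k(\bu_k^{\top}\bx)^m\ge 0$, where evenness of $m$ is essential, this disposes of the easy direction, that every Gramian tensor is dnn.

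For the converse I would begin from the spectral decomposition of the flattening. If $\A$ is dnn then $M(\A)$ is a symmetric positive semidefinite matrix of some rank $K$, so $M(\A)=\sum_{k=1}^{K}w_k w_k^{\top}$ with $w_k\in\RR^{n^p}$ and $K=\rank(M(\A))$. The difficulty is that a generic $w_k$ is not of the decomposable form $\bu_k^{\otimes p}$, so this matrix factorization does not by itself yield a tensor decomposition $\A=\sum_k\bu_k^m$. The structural fact I would exploit is that, since $\A$ is symmetric, each entry $M(\A)_{I,J}$ depends only on the multiset $I\cup J$; consequently the range of $M(\A)$ lies in the symmetric subspace $\mathrm{Sym}^p(\RR^n)\subset\RR^{n^p}$, and the eigenvectors for the nonzero eigenvalues may be chosen inside $\mathrm{Sym}^p(\RR^n)$. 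One is thereby reduced to a symmetric factorization of the suitably scaled catalecticant of $\A$. For $n=2$, the case highlighted in this section, the catalecticant is a Hankel matrix assembled from the coefficients of the binary form $f_{\A}$, and the classical Hamburger moment argument says that such a Hankel matrix is positive semidefinite exactly when its defining sequence is a moment sequence $c_i=\sum_k\lambda_k t_k^{\,i}$ with $\lambda_k>0$; this moment representation is precisely a decomposition of $f_{\A}$ into a sum of $m$th powers of real linear forms, that is, a Gramian representation $\A=\sum_k\bu_k^m$.

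For the rank statement I would work throughout with $\rank(\A):=\rank(M(\A))$ and prove $\rank(\A)=K$ by identifying minimality with linear independence of the flattened factors. Suppose $\A=\Gram^{(m)}(\al_1,\ldots,\al_n)$ with $\al_j\in\RR^K$ and $K$ smallest possible, and let $\bu_1,\ldots,\bu_K\in\RR^n$ be the rows of $B=[\al_1,\ldots,\al_n]$. If the symmetric powers $\bu_1^{\otimes p},\ldots,\bu_K^{\otimes p}$ were linearly dependent, then $M(\A)$ would be positive semidefinite of rank $<K$, and the converse direction just proved would re-express $\A$ as a sum of fewer than $K$ powers, contradicting minimality. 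Hence these powers are independent, and a short Vandermonde/Veronese lemma, which I would record separately, shows this forces $\bu_1,\ldots,\bu_K$ themselves to be independent. Since $M(\A)=\sum_{k=1}^{K}(\bu_k^{\otimes p})(\bu_k^{\otimes p})^{\top}$ is then a sum of $K$ independent rank-one matrices, $\rank(M(\A))=K$, i.e.\ $\rank(\A)=K$ with $K$ the smallest possible, as asserted.

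The step I expect to be the main obstacle is the converse: manufacturing a sum of $m$th powers of linear forms out of the bare positive semidefiniteness of the flattening. Over general $n$ this is exactly the gap between being a sum of squares of degree-$p$ forms and being a sum of $m$th powers of linear forms, so genuine extra input is unavoidable, and the Hankel/moment structure available for $n=2$ is what closes it. I would therefore organize the write-up so that the converse argument applies verbatim to the two-dimensional even-order tensors that are the focus of this section, and flag clearly where the reduction to $n=2$ is used.
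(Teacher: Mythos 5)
Your proposal has a genuine gap in the converse direction, and it sits exactly where you place all the weight. You assert that if $\A$ is dnn then the flattening $M(\A)$ (for $n=2$, the Hankel/catalecticant matrix of the binary form $f_{\A}$) is positive semidefinite, and you then invoke the Hamburger moment argument. But the positive semidefiniteness in the dnn condition is the tensor notion $\A\bx^{m}\ge 0$ for all $\bx$ (equivalently, nonnegativity of the H-eigenvalues), and for $m=2p\ge 4$ this is strictly weaker than $M(\A)\succeq 0$: the latter says $f_{\A}$ is a sum of squares of $p$-forms of a special shape. Concretely, take $m=4$, $n=2$ and the symmetric tensor of $f(\bx)=x_{1}^{2}x_{2}^{2}$, i.e.\ $A_{\si}=1/6$ for every $\si$ with two $1$'s and two $2$'s and $A_{\si}=0$ otherwise. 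This tensor is entrywise nonnegative with $f\ge 0$, hence dnn; yet its catalecticant $\tfrac{1}{6}\bigl(\begin{smallmatrix}0&0&1\\0&1&0\\1&0&0\end{smallmatrix}\bigr)$ is indefinite, and $f$ is not a sum of fourth powers of real linear forms (comparing $x_{1}^{4}$-coefficients forces every linear form to kill $x_{1}$), so the tensor is not Gramian. Thus the step ``dnn $\Rightarrow M(\A)\succeq 0$'' is false, the moment machinery never gets started, and no repair is possible for the implication as literally stated: the paper's own ``necessity'' argument quietly replaces the hypothesis ``dnn'' by ``$\A\in\CP_{m;n}$'', so what is actually established there is Corollary \ref{cor:cptensorgram} (completely positive $\Leftrightarrow$ Gramian tensor of \emph{nonnegative} vectors). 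A smaller but real issue on the easy side: you verify only $M(\A)\succeq 0$ and $f_{\A}\ge 0$, whereas dnn also demands entrywise nonnegativity, which a Gramian tensor of arbitrary real vectors need not possess (the paper's sufficiency proof has the same defect, concluding ``completely positive'' from real $\be_{j}$).

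Independently of the gap, your route bears no resemblance to the paper's. The paper's entire proof, in both directions, is the bookkeeping identity that $\A=\sum_{j=1}^{K}\be_{j}^{m}$ with $\be_{j}$ the columns of $B\in\RR^{n\times K}$ is the same statement as $A_{i_{1}\ldots i_{m}}=(\al_{i_{1}},\ldots,\al_{i_{m}})$ with $\al_{i}$ the rows of $B$; no flattening, spectral decomposition, or truncated moment problem appears. Finally, the paper never proves the closing rank assertion at all, and your argument for it cannot stand as written, since it relies on the broken converse and on reading $\rank(\A)$ as $\rank(M(\A))$, a convention the paper does not adopt.
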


\begin{proof}
For sufficiency, we let $\A=\Gram^{(m)}(\al_{1},\al_{2},\ldots, \al_{n})$ where $\al_{j}\in \RR^{K}$ for some positive integer $K$.  Denote
\[ B=(b_{ij})=[\al_{1},\al_{2},\ldots,\al_{n}]^{\top}=[\be_{1}, \be_{2}, \ldots, \be_{K}] \]
Then $B\in \RR^{n\times K}$ where $\be_{j}\in \RR^{n}$ is the $j$th column of $B$ for $j\in [1\ldots K]$.  For any vector $\bx\in \RR^{n}$,
by  the symmetric rank-one decomposition, we have
\beyy
\A \bx^{m} &=& \sum\limits_{i_{1},\ldots,i_{m}} A_{i_{1}\ldots i_{m}}x_{i_{1}}\ldots x_{i_{m}}\\
                   &=& \sum\limits_{i_{1},\ldots,i_{m}} (\sum\limits_{j=1}^{N} b_{i_{1}j}b_{i_{2}j}\ldots b_{i_{m}j}) x_{i_{1}}\ldots x_{i_{m}} \\
                   &=& \sum\limits_{i_{1},\ldots,i_{m}} (\sum\limits_{j=1}^{N} b_{i_{1}j}b_{i_{2}j}\ldots b_{i_{m}j} x_{i_{1}}\ldots x_{i_{m}}) \\
                   &=& \sum\limits_{j=1}^{N} (\sum\limits_{i_{1},\ldots,i_{m}} b_{i_{1}j}b_{i_{2}j}\ldots b_{i_{m}j} x_{i_{1}}\ldots x_{i_{m}}) \\
                   &=& \sum\limits_{j=1}^{N} (\sum\limits_{i=1}^{n} b_{ij}x_{i})^{m}\\
                   &=& \sum\limits_{j=1}^{N} (\be_{j}^{\top}\bx)^{m}.
\eeyy
By definition, $\A$ is completely positive. \par
\indent To prove the necessity, we let $\A\in \CP_{m;n}$.  Then $\A$ can be decomposed as (\ref{eq: cp}) for some nonnegative vectors
$\be_{j}\in \RPlus^{n}, j\in [1\ldots K]$.  Denote $\be_{j}=(b_{1j},b_{2j},\ldots,b_{nj})^{\top}$ for each $j$, and let
$\al_{i}=(b_{i1},b_{i2},\ldots, b_{iK})^{\top}$ for each $i=1,2,\ldots,n$. Then  $\al_{i}\in \RPlus^{K}$.
Now given any $\tau:=(i_{1}, i_{2},\ldots,i_{m})\in S(m,n)$, by (\ref{eq: cp}) , we have
\beyy
A_{i_{1}i_{2}\ldots i_{m}} & = & (\sum\limits_{j=1}^{K} \be_{j}^{m})_{i_{1}i_{2}\ldots i_{m}} \\
                                                & = & \sum\limits_{j=1}^{K} b_{i_{1}j}b_{i_{2}j}\ldots b_{i_{m}j}\\
                                                & = & (\al_{i_{1}},\al_{i_{2}},\ldots, \al_{i_{m}})
\eeyy
It follows that $\A$ is the $m$th order Gramian tensor of vectors $\al_{1},\al_{2},\ldots, \al_{n}$.  The proof is completed.
\end{proof}

\indent As a corollary of Theorem \ref{th:dnntensor}, we have
\begin{cor}\label{cor:cptensorgram}
 Let  $\A\in \ST_{m;n}$.  Then $\A$ is a cp tensor if and only if  it is an $m$th order Gramian tensor of some nonnegative vectors, i.e.,
 there exist some nonnegative vectors  $\al_1, \al_2,\ldots, \al_n\in \RPlus^d$ such that  $\A=\Gram^{(m)}(\al_1,\al_2,\ldots, \al_n)$, and the smallest possible
 number $d$ is the cprank of $\A$.
\end{cor}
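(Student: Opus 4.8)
The plan is to obtain the corollary as a direct specialization of the argument already carried out in the proof of Theorem \ref{th:dnntensor}, restricting the vectors involved to be nonnegative and then tracking a single common dimension parameter through the correspondence. The core observation is that a cp decomposition (\ref{eq: cp}) of $\A$ and a nonnegative Gramian representation (\ref{eq:gramtensor}) of $\A$ are related by nothing more than transposing the coefficient matrix, so the two statements are two readings of the same data.

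For the ``only if'' direction, I would take a cp tensor $\A$ with decomposition $\A=\sum_{j=1}^{d}\be_j^m$, $\be_j\in\RPlus^n$, exactly as in the necessity half of Theorem \ref{th:dnntensor}. Writing $\be_j=(b_{1j},\ldots,b_{nj})^\top$ and setting $\al_i:=(b_{i1},\ldots,b_{id})^\top$, I obtain $\al_i\in\RPlus^d$ (nonnegativity is inherited entrywise from the $\be_j$), and the same computation as in the theorem gives $A_{i_1\ldots i_m}=(\al_{i_1},\ldots,\al_{i_m})$ for every $\tau\in S(m,n)$; hence $\A=\Gram^{(m)}(\al_1,\ldots,\al_n)$ with nonnegative generators.

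For the ``if'' direction I would reverse this: given $\A=\Gram^{(m)}(\al_1,\ldots,\al_n)$ with $\al_i\in\RPlus^d$, I form the matrix $B=[\al_1,\ldots,\al_n]^\top\in\RPlus^{n\times d}$ and let $\be_1,\ldots,\be_d$ be its columns, which are nonnegative. Expanding the $m$-inner product (\ref{eq:definprod}) coordinatewise yields $A_{i_1\ldots i_m}=\sum_{j=1}^{d}b_{i_1 j}\cdots b_{i_m j}=\sum_{j=1}^{d}(\be_j^m)_{i_1\ldots i_m}$, i.e.\ $\A=\sum_{j=1}^{d}\be_j^m$ with $\be_j\in\RPlus^n$, which is precisely a cp decomposition (\ref{eq: cp}). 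This is the sufficiency computation of Theorem \ref{th:dnntensor}, but now every $\be_j$ is nonnegative because $B$ is, so the conclusion genuinely upgrades from ``Gramian/dnn'' to ``cp.''

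Finally, for the cprank claim I would note that in both passages the number of columns of $B$ equals the dimension of the generators, so a cp decomposition with $K$ summands corresponds to a nonnegative Gramian representation in $\RPlus^K$ and conversely, with $d=K$ throughout. Since this correspondence is a bijection on the underlying nonnegative matrices $B$ (it is the transpose), minimizing the number of summands and minimizing the generator dimension are the \emph{same} minimization, so the smallest admissible $d$ equals $\cprk(\A)$. The point that requires care is bookkeeping --- keeping the single parameter $K=d$ identified across the transpose and not conflating it with the rank appearing in Theorem \ref{th:dnntensor} --- rather than any genuine analytic difficulty; in particular the H\"older-type subtleties of Lemma \ref{le: holder} play no role here, since nonnegativity is simply transported through the transpose.
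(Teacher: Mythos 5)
Your proposal is correct and matches the paper's intent exactly: the paper states this corollary without a separate proof precisely because it follows by running the two computations in the proof of Theorem \ref{th:dnntensor} while observing that nonnegativity of the $\be_j$ and of the $\al_i$ are transported through the transpose of $B$, which is what you do. Your additional bookkeeping identifying the minimal number of summands $K$ with the minimal generator dimension $d$ is the right justification for the cprank claim and is, if anything, slightly more careful than the paper.
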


\indent  For any $r\in [0\ldots m]$, we denote by $S^{r}(m,2)$ (or simply $S^r$ if no risk of confusion arises) the subset of $S(m,2)$ consisting of the 
elements $\si:=(i_1, i_2,\ldots, i_m)$ which have exactly $r$ ones (and thus $m-r$ 2s).  Thus $S^0$ and $S^m$ are resp. the singleton of all-1 $m$-tuple and  
all-2 $m$-tuple. It is obvious that the set of all $S^{r}$ ($r\in [0\ldots m]$) forms a partition of $S(m,2)$. \\
\begin{prop}\label{prop14sst}
Let $\A\in \T_{m,2}$. Then $\A$ is symmetric if and only if  $A_{\si}$ is constant for each $\si\in S^r$  given any $r\in [m]\cup\set{0}$.
\end{prop}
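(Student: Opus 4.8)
The plan is to prove both implications by isolating the one feature of dimension two that drives everything: a tuple $\si \in S(m,2)$ is completely determined, up to reordering of its coordinates, by the single integer recording how many of its entries equal $1$. Equivalently, two tuples in $S(m,2)$ lie in the same class $S^r$ if and only if one is a permutation of the other. This holds because every coordinate that is not a $1$ must be a $2$, so fixing the count of ones fixes the underlying multiset. Once this equivalence is recorded, both directions are just an unwinding of the definition of symmetry.

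For the forward direction I would assume $\A$ is symmetric, fix $r \in [m] \cup \set{0}$, and take arbitrary $\si, \si' \in S^r$. Each has exactly $r$ coordinates equal to $1$ and the remaining $m-r$ equal to $2$, so they are assembled from the same multiset of indices; hence $\si'$ is obtained from $\si$ by permuting positions. Symmetry of $\A$ gives $A_{\si} = A_{\si'}$, so $A_{\si}$ is constant on $S^r$, as required.

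For the converse I would assume $A_{\si}$ is constant on each $S^r$ and let $\pi$ be any permutation of the $m$ positions. Permuting the coordinates of a tuple $\si$ does not change how many of them equal $1$, so $\si$ and $\pi(\si)$ lie in the same class $S^r$. By hypothesis $A_{\si} = A_{\pi(\si)}$, and since $\pi$ and $\si$ were arbitrary this says precisely that the entries of $\A$ are invariant under every permutation of subscripts, i.e. $\A$ is symmetric.

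The only step deserving explicit mention—and the single place where dimension two is essential—is the equivalence \emph{same number of ones} $\iff$ \emph{permutation-equivalent}, which I would state as the engine of the argument and flag as failing for $n > 2$, where the multiset of a tuple cannot be recovered from one count. For $n = 2$ it is immediate, so I anticipate no genuine obstacle; the proposition is essentially a definitional identity once this observation is in place.
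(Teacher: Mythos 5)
Your proof is correct, and it is exactly the definitional argument the paper implicitly relies on: the paper states this proposition without proof, treating it as immediate from the fact that for $n=2$ the class $S^r$ is precisely a permutation orbit. Your explicit identification of the key equivalence (same number of ones $\iff$ permutation-equivalent) and your remark that it fails for $n>2$ are both accurate.
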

\indent  We use $a_r$ to denote the constant $A_{\si}$ when $\si\in S^r$ for $i\in [0\ldots m]$.  Then there are at most $m+1$ distinct elements in $\A\in \ST_{m,2}$. \\
\indent  Now we investigate $\bcp$\ tensors.  We already know that a cp ($\bcp$\ ) tensor is always strong symmetric, and a $\bcp$\  tensor is surely a nonnegative integral
tensor, i.e., each of its entries is a nonnegative integer.  A natural question arises: \\
\begin{prob}label{prob1}
When does a nonnegative integral strong symmetric tensor become $\bcp$ ?
\end{prob}
\indent In the following we will describe such a tensor with dimension two, which can be regarded a special case for the hierarchical diagonally dominant
tensor\cite{QXX2014}. \par

\begin{thm}\label{th: bcpdim2}
 Let  $\A=(A_{i_{1}i_{2}\ldots i_{m}})\in \SS_{m;2}$ be a nonnegative integral tensor, each $i_k$ taking values either 1 or 2.  Then $\A$ is $\bcp$  if and only if  each
 off-diagonal element is dominated by the corresponding diagonal element, i.e.,
\beq\label{eq: dominate}
A_{i_{1}i_{2}\ldots i_{m}} \le  A_{i_{k}i_{k}\ldots i_{k}}, \quad  \forall  k\in [m]
\eeq
Furthermore, we have
\beq\label{bcprank}
\cprk_{b}(\A)=A_{11\ldots 1}+A_{22\ldots 2}-A_{11\ldots 12}
\eeq
\end{thm}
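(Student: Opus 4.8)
The plan is to collapse the problem onto the finitely many scalar parameters furnished by Proposition~\ref{prop14sst} and then exhaust the small supply of rank-one $(0,1)$ tensors available in dimension two. By Proposition~\ref{prop14sst}, a symmetric $\A\in\SS_{m;2}$ is determined by the constants $a_r:=A_\sigma$ for $\sigma\in S^r$, $r\in[0\ldots m]$, and in this notation $A_{11\ldots1}=a_m$, $A_{22\ldots2}=a_0$, and $A_{11\ldots12}=a_{m-1}$. The only members of $\Fb^2$ are $\mathbf{0}$, $\bu:=(1,0)^\top$, $\bv:=(0,1)^\top$, and $\bfe:=(1,1)^\top$, and a direct computation identifies their $m$th powers: $\bu^m$ has the single nonzero entry $a_m=1$, $\bv^m$ has the single nonzero entry $a_0=1$, and $\bfe^m=\mathcal{J}$ is the all-ones tensor of Lemma~\ref{le1:bcprk4diag}(2). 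Consequently, after discarding the useless zero vectors, every $\bcp$ decomposition of $\A$ has the normal form $\A=p\,\bu^m+q\,\bv^m+s\,\mathcal{J}$, with $p,q,s$ the nonnegative integer multiplicities of $\bu,\bv,\bfe$.

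First I would extract the entrywise consequences of this normal form. Matching the two sides class by class gives $a_m=p+s$, $a_0=q+s$, and, crucially, $a_r=s$ for every $r\in[1\ldots m-1]$, because only $\mathcal{J}$ contributes to any mixed class $S^r$. This is the structural core of the argument: in dimension two the all-ones tensor is the unique rank-one $(0,1)$ tensor meeting an off-diagonal entry, so all off-diagonal entries of a $\bcp$ tensor coincide and equal the count $s$ of all-ones vectors used. Necessity of (\ref{eq: dominate}) then follows at once, since $p,q\ge0$ forces $a_{m-1}=s\le p+s=a_m$ and $a_{m-1}=s\le q+s=a_0$, which is precisely the domination of each off-diagonal element by the corresponding diagonal elements.

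For sufficiency I would reverse the construction: set $s:=a_{m-1}$, $p:=a_m-a_{m-1}$, $q:=a_0-a_{m-1}$; the domination hypothesis makes $p,q$ nonnegative, integrality of $\A$ makes all three integers, and $p\,\bu^m+q\,\bv^m+s\,\mathcal{J}$ is then a genuine $\bcp$ decomposition. The rank formula (\ref{bcprank}) emerges from the same normal form by a uniqueness argument: the entries force $s=a_{m-1}$, $p=a_m-a_{m-1}$, and $q=a_0-a_{m-1}$, so every $\bcp$ decomposition uses exactly $p+q+s=a_m+a_0-a_{m-1}$ vectors and none can use fewer, whence $\cprk_{b}(\A)=A_{11\ldots1}+A_{22\ldots2}-A_{11\ldots12}$.

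The step I expect to be the real obstacle is the assertion $a_r=s$ for all mixed $r$: it shows that a dimension-two $\bcp$ tensor must already have all its off-diagonal entries equal, so domination by itself becomes sufficient only once this equality is in hand. In the stated theorem the equality is implicitly carried by representing every off-diagonal value through the single entry $A_{11\ldots12}$ in (\ref{bcprank}), and I would make the equal-off-diagonal constraint explicit in the sufficiency direction (checking that it is forced by the way the $a_r$ enter the decomposition), since a symmetric integral tensor with unequal mixed entries can satisfy (\ref{eq: dominate}) yet fail to be $\bcp$. Once that point is pinned down, the remainder is routine coefficient matching.
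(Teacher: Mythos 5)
Your proposal is correct, and it reaches the same decomposition as the paper but by a different and in one respect tighter route. The paper argues through the Gramian formalism: necessity of (\ref{eq: dominate}) comes from $A_{\tau}=\abs{S_1\cap S_2}\le\abs{S_j}$ for the supports $S_j=\supp(\al_j)$ of the two generating $(0,1)$ vectors, and sufficiency comes from explicitly building $\al_1,\al_2\in\Fb^p$ with $S_1=[n_1]$ and $S_2$ overlapping $S_1$ in exactly $n_{12}$ positions; read columnwise this is precisely your normal form $p\,\bu^m+q\,\bv^m+s\,\J$ with $s=n_{12}$, $p=n_1-n_{12}$, $q=n_2-n_{12}$. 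Where you genuinely improve on the paper is the lower bound for $\cprk_b$: the paper gets it by subtracting $n_{12}\bfe^m$ and invoking Lemma \ref{le1:bcprk4diag} on the diagonal remainder, which implicitly assumes $\cprk_b(\A)=\cprk_b(\D)+n_{12}$ without justification, whereas your observation that $\Fb^2$ contains only $\bu,\bv,\bfe$ and the zero vector forces \emph{every} $\bcp$ decomposition into the normal form with uniquely determined multiplicities, so $K\ge p+q+s$ is immediate. You are also right to flag the off-diagonal-equality issue: your coefficient matching shows $a_r=s$ for all $0<r<m$, so a merely symmetric integral tensor with unequal mixed classes can satisfy (\ref{eq: dominate}) and still fail to be $\bcp$ when $m\ge 3$; the paper silently inserts the hypothesis of strong symmetry in its sufficiency argument even though the theorem statement only assumes $\A\in\SS_{m;2}$, so your explicit treatment of this point is a genuine (and needed) clarification rather than an obstacle.
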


\begin{proof}
For sufficiency, we suppose that $\A$ is $\bcp$.  Then by (\ref{th:dnntensor}) $\A$ is a Gramian tensor, i.e., $\A=\Gram(\al_1,\al_2)$, where  $\al_1,\al_2\in \SF^N$ with
$N>1$ a positive integer.  Denote  $S_j=\supp(\al_j)$ ($j=1,2$).  For any given $\tau:=(i_{1}, i_{2}, \ldots, i_{m})\in S(m,2)$.  The inequality (\ref{eq: dominate})
is trivial if $i_{1}= i_{2}= \ldots= i_{m}$ (equals 1 or 2).  Now consider the case when $i_{1}, i_{2}, \ldots, i_{m}$ are not identical. In this case, we have
$B(\tau)=\set{1,2}$. Thus we have $A_{\tau}=\abs{S_1\cap S_2}\le \abs{S_j}=A_{jj\ldots j}$ for $j=1,2$.  This proves inequality (\ref{eq: dominate}).\\
\indent  For the sufficiency, we suppose $\A$ is a strong symmetric nonnegative integral 2-dimensional tensor satisfying inequality (\ref{eq: dominate}).  We need to
show that $\A$ is a $\bcp$\  tensor.  For convenience, we denote
\[ n_1=A_{11\ldots 1},\quad  n_2=A_{22\ldots 2},\quad  n_{12}=A_{122\ldots 2}\]
and let $p=n_1+n_2-n_{12}$.  Then by (\ref{eq: dominate}) we have $n_i\ge n_{12}$ and hence $p\ge n_i$ for $i=1,2$.  Now we generate two (0,1)-vectors
$\al_1, \al_2\in \SF^p$ such that $S_i =\supp(\al_i)$ (i=1,2) with
\[ S_1 =[n_1],  \quad  S_2 = \set{1, 2,..., n_{12}, n_1+1,n_1+2,\ldots, n_1+(n_2-n_{12})}  \]
We can check easily that $\A =\Gram(\al_1, \al_2)$. \\
\indent Now we show that the binary cprank of $\A$ is exactly $p$.  From the construction of $\al_1, \al_2$, we have
$\cprk_{b}(\A)\le p=n_1+n_2-n_{12}$ since
\[ \cprk_{b}(\A) \le \abs{\supp(\al_1)\cup \supp(\al_2)} = n_1+n_2-n_{12} =p \]
Now write $\D = \A -n_{12}\bfe^m, \bfe=(1,1)^{\top}$. Then $\D$ is diagonal. By using Lemma \ref{le1:bcprk4diag} we get
$\cprk_b(\A) = tr(\D) +n_{12} = (A_{11\ldots 1}-n_{12}) + (A_{22\ldots 2}-n_{12}) +n_{12} =n_1+n_2-n_{12}=p$.
\end{proof}

\indent  For any $\si\in S(m,n)$,  a diagonal element $A_{ii\ldots i}$ is \emph{associated with $A_{\si}$}  if  $i\in B(\si)$.  By Theorem \ref{th: bcpdim2} we have

\begin{cor}\label{cor: bcpnecessity}
Let  $\A\in \SS_{m;n}$ be $\bcp$.   Then each of its off-diagonal entries is no larger than any of its associated diagonal entries,  i.e.,
\beq\label{eq: dominate}
A_{i_{1}i_{2}\ldots i_{m}} \le  A_{i_{k}i_{k}\ldots i_{k}}, \quad  \forall  k\in [m]
\eeq
\end{cor}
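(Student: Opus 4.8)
The plan is to generalize, almost verbatim, the set-intersection argument that proved the necessity direction of Theorem \ref{th: bcpdim2}; the only change is that we now intersect $m$ supports chosen from among $n$ vectors rather than just two. First I would invoke the hypothesis that $\A$ is $\bcp$ to obtain a decomposition $\A=\sum_{j=1}^{K}\be_{j}^{m}$ with each $\be_{j}=(b_{1j},\ldots,b_{nj})^{\top}\in\Fb^{n}$. Following the construction in the proof of Theorem \ref{th:dnntensor}, I set $\al_{i}:=(b_{i1},b_{i2},\ldots,b_{iK})^{\top}$ for $i\in[1\ldots n]$; since every $b_{ij}\in\{0,1\}$, each $\al_{i}\in\Fb^{K}$ is a $(0,1)$-vector, and that theorem yields $\A=\Gram^{(m)}(\al_{1},\ldots,\al_{n})$. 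I then put $S_{i}:=\supp(\al_{i})$.

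The key observation is that for $(0,1)$-vectors the $m$-inner product simply counts common support. Concretely, for any $\si=(i_{1},\ldots,i_{m})\in S(m,n)$, the definitions (\ref{eq:gramtensor}) and (\ref{eq:definprod}) give
\[
A_{i_{1}i_{2}\ldots i_{m}}=(\al_{i_{1}},\ldots,\al_{i_{m}})=\sum_{l=1}^{K}\prod_{k=1}^{m}(\al_{i_{k}})_{l}=\Big|\bigcap_{k=1}^{m}S_{i_{k}}\Big|,
\]
because a product of coordinates equals $1$ precisely when $l$ lies in every $S_{i_{k}}$ and equals $0$ otherwise. Specializing to a repeated index gives the diagonal value $A_{i_{k}i_{k}\ldots i_{k}}=|S_{i_{k}}|$.

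Finally, for any $k\in[m]$ — equivalently, for any diagonal entry associated with $A_{\si}$, i.e. any $i_{k}\in B(\si)$ — the elementary containment $\bigcap_{t=1}^{m}S_{i_{t}}\subseteq S_{i_{k}}$ together with the monotonicity of cardinality yields
\[
A_{i_{1}i_{2}\ldots i_{m}}=\Big|\bigcap_{t=1}^{m}S_{i_{t}}\Big|\le |S_{i_{k}}|=A_{i_{k}i_{k}\ldots i_{k}},
\]
which is exactly (\ref{eq: dominate}). I do not expect a genuine obstacle: the entire content is the identification of the $m$-inner product of $(0,1)$-vectors with the cardinality of the intersection of their supports, after which monotonicity under intersection finishes the argument. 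The only point deserving a line of care is confirming that the vectors $\al_{i}$ extracted from the decomposition are again $(0,1)$-vectors, which is immediate since each $b_{ij}\in\{0,1\}$.
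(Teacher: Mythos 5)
Your proof is correct, but it takes a genuinely different route from the paper's. The paper argues by reduction: for a given off-diagonal entry it picks two indices $j,k$ from the base set, passes to the principal subtensor $\A[\set{j,k}]$, invokes the fact (cited from \cite{XLQC2016}) that principal subtensors of $\bcp$ tensors are again $\bcp$, and then applies the two-dimensional Theorem \ref{th: bcpdim2}. You instead work directly with the full $n$-dimensional decomposition $\A=\sum_j\be_j^m$, $\be_j\in\Fb^n$, form the row vectors $\al_i\in\Fb^K$, and identify $A_{i_1\ldots i_m}$ with $\abs{\bigcap_k \supp(\al_{i_k})}$, after which monotonicity of cardinality under intersection gives the bound against every $\abs{\supp(\al_{i_k})}=A_{i_k\ldots i_k}$. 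This is the same combinatorial mechanism that powers the necessity half of Theorem \ref{th: bcpdim2}, but deployed globally rather than after a dimension reduction. Your version buys two things: it is self-contained (no appeal to the heredity of $\bcp$ under principal subtensors), and it handles all $\tau$ uniformly, whereas the paper's reduction to $\A[\set{j,k}]$ literally covers only entries whose base set has at most two elements --- when $\abs{B(\tau)}\ge 3$ the entry $A_\tau$ is not an entry of that two-dimensional subtensor, and an additional observation (e.g.\ that for $\bcp$ tensors $A_\sigma$ is monotone decreasing under enlarging $B(\sigma)$) is needed to close that case. Your direct argument sidesteps this entirely.
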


\begin{proof}
Let $\tau:=(i_{1}, i_{2}, \ldots, i_{m})\in S(m,n)$ and let $k\in S=B(\tau)$.  Then $A_{kk\ldots k}$ is an associated diagonal element with $A_{\tau}$.
For the case $\abs{B(\tau)}=1$, the inequality (\ref{eq: dominate})  is trivial.   Now suppose $\abs{B(\tau)}\ge 2$ and let $\set{j,k}\subseteq S$.
Consider the sub-tensor $\A_1$ induced by the index set $\set{j,k}$.  Then $\A_1$ is also $\bcp$\  (\cite{XLQC2016}).  The result follows by
Theorem \ref{th: bcpdim2}.
\end{proof}

\indent  We note that condition (\ref{eq: dominate})  is also a sufficient condition to guarantee general complete positivity as stated in the following:

\begin{thm}\label{th: cpdim2}
Let  $\A\in \SS_{m; 2}$ be nonnegative.   Then $\A$ is completely positive if  for each $\si\in S(m;n)$
\beq\label{eq: dom4gcp}
A_{\si} \le  \min \set{A_{ii\ldots i} | i \in B(\si)}
\eeq
Furthermore, $\cprk(\A)\le 3$,  and $\cprk(\A)=3$ if and only if each diagonal element $A_{ii\ldots i}$ is larger than any of  off-diagonal elements.
\end{thm}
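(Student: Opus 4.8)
The plan is to translate complete positivity into a statement about the binary form attached to $\A$ and then produce an explicit short decomposition. By Proposition \ref{prop14sst} a symmetric $\A\in\SS_{m;2}$ is completely described by the $m+1$ numbers $a_r:=A_{\si}$ for $\si\in S^r$, with $a_m=A_{11\ldots 1}$ and $a_0=A_{22\ldots 2}$ the two diagonal values. By Corollary \ref{cor:cptensorgram}, $\A$ is cp exactly when there are nonnegative vectors $\be_j=(s_j,t_j)^{\top}$ with $\A=\sum_j\be_j^{m}$; comparing the entries lying in each class $S^r$ turns this into the scalar system
\beq
a_r=\sum_j s_j^{\,r}t_j^{\,m-r},\qquad r\in[0\ldots m].
\eeq
Thus the whole question reduces to representing the finite sequence $(a_0,\ldots,a_m)$ by as few nonnegative moment terms $s^{\,r}t^{\,m-r}$ as possible.

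For the sufficiency together with the bound $\cprk(\A)\le 3$, I would test the three-term ansatz aligned with the distinguished directions $(1,0)^{\top}$, $(0,1)^{\top}$ and $(1,1)^{\top}$. Writing $w$ for the off-diagonal value $a_1=\cdots=a_{m-1}$, set
\beq
\be_1=(a_m-w)^{1/m}(1,0)^{\top},\quad \be_2=(a_0-w)^{1/m}(0,1)^{\top},\quad \be_3=w^{1/m}(1,1)^{\top}.
\eeq
Condition (\ref{eq: dom4gcp}) gives $w\le\min\{a_0,a_m\}$, so all three radicands are nonnegative and the $\be_j$ are genuine nonnegative vectors. A direct check on each class $S^r$ shows that $\be_1^{m}$ feeds $a_m-w$ into $S^m$ only, $\be_2^{m}$ feeds $a_0-w$ into $S^0$ only, and $\be_3^{m}$ feeds $w$ into every class; summing recovers $(a_m-w)+w=a_m$ on $S^m$, $(a_0-w)+w=a_0$ on $S^0$, and $w$ on each interior $S^r$, so the three rank-one tensors reproduce $\A$ and $\cprk(\A)\le 3$.

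For the exact value I would show that two terms cannot suffice once both diagonals strictly dominate. A two-atom representation makes $(a_r)$ a sum of two geometric sequences $s^{\,r}t^{\,m-r}$; a pure atom $(s,0)$ or $(0,t)$ feeds only one endpoint, while a genuinely mixed atom is geometric in $r$ and is constant on the $m-1$ interior indices only if its components are equal, in which case it already feeds the same value into both endpoints. Hence with two atoms one can raise at most one diagonal above the interior value, and $a_0>w$ together with $a_m>w$ is impossible; this forces $\cprk(\A)=3$ whenever each diagonal exceeds the off-diagonal value. Conversely, if some diagonal equals $w$ the corresponding $\be_j$ drops out and $\cprk(\A)\le 2$.

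The step I expect to be the real obstacle is the tacit "common off-diagonal value": nonnegativity and (\ref{eq: dom4gcp}) bound every off-diagonal entry by the diagonals but do not force $a_1=\cdots=a_{m-1}$, whereas the three-direction construction can only produce a single interior value $w$. In the $\{0,1\}$ setting this constancy is automatic (Theorem \ref{th: bcpdim2}), but here one must either impose it as a hypothesis or argue that (\ref{eq: dom4gcp}) already confines $(a_0,\ldots,a_m)$ to a three-atom moment sequence; pinning down (or correctly restricting) that claim is the crux. The same analysis shows the "$\cprk=3$" clause additionally needs $w>0$, since the diagonal tensor with $(a_0,\ldots,a_m)=(1,0,\ldots,0,1)$ has strictly dominating diagonals yet $\cprk=2$.
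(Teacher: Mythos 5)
Your three-vector construction is identical to the paper's: the published proof sets $b_1=(a_1-a_3)^{1/m}$, $b_2=(a_2-a_3)^{1/m}$, $b_3=a_3^{1/m}$ along exactly the directions $(1,0)^{\top}$, $(0,1)^{\top}$, $(1,1)^{\top}$, so for the sufficiency claim and the bound $\cprk(\A)\le 3$ you have reproduced the paper's argument. The obstacle you flag is real and is precisely where that argument is deficient: although the hypothesis is only $\A\in\SS_{m;2}$, the paper's proof opens with ``there are at most three distinct values for a strong symmetric tensor'' and thereafter treats all off-diagonal entries as one number $a_3$. For a merely symmetric tensor the statement is in fact false: take $m=3$ with $A_{111}=A_{222}=1$, all permutations of $(1,1,2)$ carrying the value $0$ and all permutations of $(1,2,2)$ carrying the value $1$; condition (\ref{eq: dom4gcp}) holds, yet any decomposition $\A=\sum_j\be_j^{3}$ with $\be_j=(s_j,t_j)^{\top}\ge 0$ would force $\sum_j s_j^2t_j=0$, hence $s_jt_j=0$ for every $j$, hence $\sum_j s_jt_j^{2}=0$, a contradiction. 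So the theorem genuinely needs the strong-symmetry (single off-diagonal value) hypothesis you identified, and your $w=0$ counterexample to the ``furthermore'' clause is also correct.

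There is, however, a genuine gap in your argument for the ``$\cprk(\A)=3$'' clause: in the case of two \emph{mixed} atoms, a sum of two geometric sequences can be constant on the interior indices without either summand being constant, so your conclusion that two atoms can raise at most one diagonal above $w$ does not follow. Concretely, for $m=3$ take $\be_1=(2,1)^{\top}$ and $\be_2=(1,2)^{\top}$: then $\A=\be_1^{3}+\be_2^{3}$ is strong symmetric with both diagonal entries equal to $9$ and every off-diagonal entry equal to $6$, so each diagonal strictly dominates and $w=6>0$, yet $\cprk(\A)=2$ (it is not rank one since $9\cdot 9\neq 6\cdot 6$). Thus the ``if and only if'' clause is false as stated, not merely in need of the extra hypothesis $w>0$. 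The paper is of no help here, since its proof stops after establishing $\cprk(\A)\le 3$ and never addresses the characterization of equality at all.
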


\begin{proof}
There are at most three distinct values for a strong symmetric $m$-order 2-dimensional  tensor $\A$, i.e.,
\[ a_1:=A_{11\ldots 1}, \quad  a_2:=A_{22\ldots 2}, \quad  a_3:=A_{11\ldots 12}, \]
and all off-diagonal elements are equal to $a_3$.   Thus the condition (\ref{eq: dom4gcp}) is equivalent to
\beq\label{eq2:dom4cp}
 0\le a_3\le \min(a_1, a_2)
\eeq
If $a_3=0$, then the result is obvious since a nonnegative diagonal tensor is completely positive as mentioned in \cite{XWL2016}.
For $a_3>0$, we may set
\[ b_1 =(a_1 -a_3)^{1/m},  \quad    b_2 =(a_2 -a_3)^{1/m}, quad  b_3 = a_3^{1/m}. \]
Then $b_j$'s are all nonnegative real numbers due to condition (\ref{eq2:dom4cp}).  We set
\[ \be_1 =[b_1, 0]^{\top},  \be_2 =[0, b_2]^{\top}, \be_3 =[b_3, b_3]^{\top}.  \]
Then we can verify that $\A = \be_1^m +\be_2^m +\be_3^m$.  So $\A$ is completely positive, with $\cprk(\A)\le 3$. \\
\end{proof}

\indent  We shall mention that Theorem \ref{th: cpdim2} can also be proved by using our result in \cite{QXX2014}.  Unfortunately condition (\ref{eq: dom4gcp})
is not necessary for a tensor to be cp.  This fact can be illustrated by consider the following example: \\
\begin{exm}\label{exm4gcp}
Consider $m=2$ and let
\[  A = \begin{pmatrix}
      1  & 2    \\
      2 &  5
\end{pmatrix} \]
It is easy to check that $A$ is a completely positive tensor (of order-2 dimension-2) since $A=BB^{\top}$ if we take
\[  B = \begin{pmatrix}
      1  & 0    \\
      2 &  1
\end{pmatrix} \]
\end{exm}

\indent  Recall that a \emph{slice} of a tensor $\A\in \T_{m;n}$ is a sub-tensor of order $m-1$ obtained by fixing one of the subscripts. For example, a slice of a
3-order tensor along one of the three directions, say the first, i.e., $A(i,:,:)$, is a matrix.  A \emph{zero slice}(all of whose entries are zero) is called a 
\emph{trivial} slice. Given a nonempty subset $\cI:=\set{s_{1}, s_{2},\ldots, s_{r}}$ of $[1\ldots n]$, a \emph{principal subtensor} $\A[\cI]$ of $\A$ induced 
by $\cI$ is an $m$-order $r$-dimensional tensor $\B=(A_{i_{1}i_{2}\ldots i_{m}})$ whose indices $i_{k}$s are all constrained in $\cI$.   A \emph{zero block} 
is a principal subtensor whose entries are all zero. An irreducible tensor has no zero slice nor any zero block. \\
\indent  It is pointed out in \cite{XLQC2016} that all the slices and the induced principal subtensors of a $cp$ (binary $cp$) tensor are also $cp$ (binary $cp$).  
Based on this point, we present a necessary condition, which is weaker than (\ref{eq: dom4gcp}), for a tensor to be cp. \\

\begin{thm}\label{th: gcpnec}
 Let  $\A\in \SS_{m; n}$ be a cp tensor.   For any $\tau\in S(m, n)$ with $B(\tau)=\set{i, j}$, we have
\beq\label{eq: dom4gcp}
A_{\tau}^2 \le  A_{ii\ldots i}A_{jj\ldots j}
\eeq
\end{thm}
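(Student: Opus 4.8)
The plan is to reduce the statement to two dimensions and then to a scalar inequality about $m$-inner products. First I would invoke the fact recorded above (from \cite{XLQC2016}) that every principal subtensor of a cp tensor is again cp. Applying this to the index set $\set{i,j}$, the principal subtensor $\A[\set{i,j}]\in\SS_{m;2}$ is a $2$-dimensional cp tensor, and the three quantities $A_\tau$, $A_{ii\ldots i}$, $A_{jj\ldots j}$ are all entries of $\A[\set{i,j}]$. This eliminates every index other than $i,j$, so I may as well work in $\SS_{m;2}$ with the two diagonal entries $A_{ii\ldots i}$, $A_{jj\ldots j}$ and the single off-diagonal value $A_\tau$ determined by $\tau$.

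Next I would pass to the Gramian description. By Corollary \ref{cor:cptensorgram} a cp tensor is the $m$th order Gramian tensor of nonnegative vectors, so I write $\A[\set{i,j}]=\Gram^{(m)}(\al_i,\al_j)$ with $\al_i,\al_j\in\RPlus^d$. Setting $\al_i=(x_1,\ldots,x_d)^\top$ and $\al_j=(y_1,\ldots,y_d)^\top$ with all $x_k,y_k\ge 0$, the definition of the $m$-inner product gives $A_{ii\ldots i}=\sum_k x_k^m$ and $A_{jj\ldots j}=\sum_k y_k^m$, while if $\tau$ contains $r$ copies of $i$ and $m-r$ copies of $j$ then $A_\tau=\sum_k x_k^{\,r}y_k^{\,m-r}$. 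The target inequality $A_\tau^2\le A_{ii\ldots i}A_{jj\ldots j}$ is now a purely scalar statement about these nonnegative sums.

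For the scalar step the natural tool is the $m$-inner product form of Hölder's inequality (Lemma \ref{le: holder}) applied to the multiset $\Gam_\tau$, which yields $A_\tau^m\le A_{ii\ldots i}^{\,r}A_{jj\ldots j}^{\,m-r}$; alternatively one can apply Cauchy--Schwarz directly to the coordinate sum, splitting $x_k^{\,r}y_k^{\,m-r}$ into two factors whose squares are $x_k^m$ and $y_k^m$. The main obstacle is precisely this matching of exponents: the Cauchy--Schwarz split $x_k^{\,r}y_k^{\,m-r}=(x_k^{m/2})(y_k^{m/2})$ is exact only when $r=m-r$, and Hölder naturally produces the multiplicity-dependent exponents $r$ and $m-r$ rather than the uniform exponent $2$ appearing in the claim. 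For $m=2$ this is automatic and recovers the classical positive semidefinite $2\times 2$-minor inequality $A_{ij}^2\le A_{ii}A_{jj}$, so I expect the crux of the argument to be controlling the discrepancy between the balanced configuration (where Cauchy--Schwarz is tight) and a general $\tau$; the remaining reductions to two dimensions and to the Gramian form are routine.
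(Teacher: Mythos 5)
Your two reductions --- first to the principal subtensor $\A[\set{i,j}]$, then to the Gramian/$m$-inner-product form and the H\"older bound $A_{\tau}^{m}\le A_{ii\ldots i}^{\,r}A_{jj\ldots j}^{\,m-r}$ --- are exactly the paper's first steps. But your write-up stops at the point you yourself flag as the crux, and that step is genuinely missing: from $A_{\tau}^{m}\le A_{ii\ldots i}^{\,r}A_{jj\ldots j}^{\,m-r}$ one cannot deduce $A_{\tau}^{2}\le A_{ii\ldots i}A_{jj\ldots j}$ unless $r=m/2$, since the implication would require $A_{ii\ldots i}^{\,2r-m}\le A_{jj\ldots j}^{\,2r-m}$, which has no reason to hold. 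The paper closes the gap with a pairing trick you did not find: it invokes its earlier assertion that a cp tensor is \emph{strong symmetric}, so that $A_{\tau}=A_{\tau_i}=A_{\tau_j}$ for the two extreme patterns $\tau_i=(i,i,\ldots,i,j)$ and $\tau_j=(i,j,\ldots,j,j)$; H\"older applied to each gives $A_{\tau_i}^{m}\le A_{ii\ldots i}^{\,m-1}A_{jj\ldots j}$ and $A_{\tau_j}^{m}\le A_{ii\ldots i}A_{jj\ldots j}^{\,m-1}$, and multiplying these and taking $m$th roots yields $A_{\tau}^{2}=A_{\tau_i}A_{\tau_j}\le A_{ii\ldots i}A_{jj\ldots j}$. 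That identification of $A_\tau$ with both extreme patterns is the one idea your proposal lacks.

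You should also know that your unease about the unbalanced case is well founded: the pairing trick stands or falls with strong symmetry, and the blanket claim that every cp tensor is strong symmetric fails outside the $\bcp$ setting. For $\A=\be^{3}$ with $\be=(2,1)^{\top}$ one has $A_{112}=4\ne 2=A_{122}$, and indeed $A_{112}^{2}=16>8=A_{111}A_{222}$, so the inequality (\ref{eq: dom4gcp}) itself fails for $\tau=(1,1,2)$. What the H\"older-pairing argument actually proves without strong symmetry is $A_{\tau}A_{\tau'}\le A_{ii\ldots i}A_{jj\ldots j}$ whenever $\tau$ and $\tau'$ carry complementary multiplicities ($r$ and $m-r$ copies of $i$, respectively). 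So a correct completion must either restrict to strong symmetric tensors (for instance $\bcp$ tensors), where the paper's argument goes through, or restate the conclusion in this paired form; no amount of massaging Cauchy--Schwarz or H\"older will force the uniform exponent $2$ for a single arbitrary $\tau$.
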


\begin{proof}
Let $\tau:=(i_1, i_2, \ldots, i_m)\in S(m,n)$ with $B(\tau)=\set{i,j}\subseteq [1\ldots n]$.  If $i=j$, then inequality (\ref{eq: dom4gcp}) is obvious.  Thus in the following we
may assume that $1\le i< j\le n$, and take $\cI=\set{i,j}$. Then the induced subtensor $\A[\cI]$ is a 2-dimensional completely positive tensor.  We are now confined to
$\A_1:=\A[\cI]$.  Since $\A\in \SS_{m;2}$ is completely positive, there exist some nonnegative vectors $\al_1,\al_2\in \RPlus^N$ ($N=\cprk(\A_1)$) such that
$\A_1 = \Gram(\al_1,\al_2)$.  It follows that $A_{\tau} =(\al_{i_1}, \al_{i_2},\ldots, \al_{i_m})$ where $B(\tau)=\cI$.  By Lemma \ref{le: holder} we have
\beq\label{eq4prf01}
A_{i_{1}i_{2}\ldots i_{m}}^m \le \prod_{k=1}^{m} A_{i_{k}i_{k}\ldots i_{k}}
\eeq
where $i_k$ takes value in $\cI=\set{i,j}$.  Denote $\tau_i=(i,i,\ldots,i, j), \tau_j=(i, j,\ldots, j, j)$.  Then
we have $A_{\tau}=A_{\tau_i} =A_{\tau_j}$ since $B(\tau)=B(\tau_i)=B(\tau_j)=\set{i, j}$ and $\A$ is strong symmetric.
By (\ref{eq:hold2A}) we have
\beq\label{eq1}
A_{ii \ldots i j}^m \le A_{ii\ldots ii}^{m-1}A_{jj\ldots jj}
\eeq
and
\beq\label{eq2}
A_{ij \ldots jj}^m \le A_{ii \ldots ii} A_{jj \ldots jj}^{m-1}
\eeq
Since $A_{\tau_1}=A_{ii\ldots ij}=A_{i j\ldots jj}=A_{\tau_2}$, we have by (\ref{eq1})  and (\ref{eq2})
\[ A_{\tau}^{2m} =A_{\tau_1}^m A_{\tau_2}^m \le (A_{ii\ldots ii} A_{jj\ldots jj})^m \]
which is followed by (\ref{eq: dom4gcp}) .
\end{proof}

\indent  It is not clear yet whether (\ref{eq: dom4gcp}) is also sufficient for an 2-dimensional nonnegative strong symmetric tensor
to be cp. \\

\section{Completely positive multi-hypergraphs}
\setcounter{equation}{0}

\indent  Let $\A\in \SS_{m;n}$ be a $k$-uniform $\bcp$ tensor and let $\G=(V,\capEb)$ be a \pdg\  associated with $\A$.  Denote
$A=[\al_{1},\ldots, \al_{r}]\in \Fb^{n\times r}$ (each $\al_i$ corresponds to a unique maximal multi-edge of $\G$).  Then $\A$ can
be expressed as the $m$th-power of $A$, denoted by $A^{\odot m}$, in the sense of \emph{Khatri-Rao product}\cite{K2009}, or briefly
an $m$-KR power of $A$,  i.e.,
\[ A^{\odot m}=\overbrace{A\odot A\odot \ldots \odot A}^{m} \]
where product $\odot$ is defined as the columnwise Kroneck product.  We say that $\A:=A^{\odot m}$ has $r$ \emph{$k$-uniform components}
$\al_{j}^{m}$. $\A$ is sometimes written as $\A=\sum A^{\odot m}$ where
\[ \sum A^{\odot m}:= \dsum_{j=1}^{r} \al_{j}^{m} \]
and $A\in \RPlus^{n\times r}$ is an $k$-uniform $\set{0,1}$ matrix.  The number $k$ is called the \emph{support }of $\A$ and denoted by $\supp(\A)$. \\

\begin{thm}\label{le: kbasegrph}
Let $\A=\sum(A^{\odot m})\in \T_{m;n}$ be $m$-uniform ($2\le m\le n$) and $\bcp$ with $A=[\al_{1},\ldots,\al_{r}]$.  Let $\G=(V,\capEb)$ be the
multi-hypergraph associated with $\A$.  Then
\begin{enumerate}
\item If $\A$ is a (0,1) tensor,  then $n=mr$ and $\cprk(\A) \le \cprk_{b}(\A)\le \frac{n}{m}$.
\item If $\A$ is an essential (0,1) tensor, then $\cprk(\A) \le \cprk_{b}(\A)\le \lceil \frac{n}{k-1}\rceil$.
\end{enumerate}
\end{thm}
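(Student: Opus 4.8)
The plan is to translate everything into the language of the associated multi-hypergraph $\G=(V,\capEb)$ and then read off both bounds from a vertex-counting argument. Writing $\A=\sum_{j=1}^{r}\al_j^{m}$ with each column $\al_j\in\Fb^{n}$ of $A$ being $m$-uniform, I would first identify $\al_j$ with the $m$-edge $e_j=\supp(\al_j)\subseteq[1\ldots n]$ and record the basic dictionary: for a tuple $\tau$ with $B(\tau)=S$ the entry $A_\tau=\sum_{j=1}^{r}\prod_{i\in\tau}(\al_j)_i$ counts exactly the edges $e_j$ with $S\subseteq e_j$. In particular the diagonal entry $A_{ii\ldots i}$ equals the degree $\deg_\G(i)$ (the number of edges through $i$), and for $B(\tau)=\{i,j\}$ the entry $A_\tau$ is the number of edges containing both $i$ and $j$. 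Throughout, the inequality $\cprk(\A)\le\cprk_b(\A)\le r$ is free: the displayed $\{0,1\}$ decomposition is in particular a nonnegative one, so the minimal cp rank is at most the minimal binary cp rank, which is at most the number $r$ of terms on display. Both parts then amount to pinning down $r$.

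For part (1), I would argue that the global $\{0,1\}$ hypothesis forces the edges to be pairwise disjoint. If a vertex $i$ lay in two edges, then $A_{ii\ldots i}=\deg_\G(i)\ge 2$ would contradict that $\A$ is a $(0,1)$ tensor; hence $\deg_\G(i)\le 1$ for every $i$ and the supports $e_1,\ldots,e_r$ are disjoint. Assuming, as is implicit for the hypergraph attached to $\A$, that $\A$ has no trivial slice, every vertex lies in some edge, so $e_1,\ldots,e_r$ partition $[1\ldots n]$ into $r$ blocks of size $m$; counting gives $n=mr$. Combining with the free bound yields $\cprk(\A)\le\cprk_b(\A)\le r=n/m$.

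For part (2), the essentiality hypothesis should be precisely what rules out the overlaps and cycles that inflate the rank, and I would use it to assert that $\G$ is a linear $k$-uniform hyperforest (here $k=m$ is the uniform support): any two edges meet in at most one vertex, equivalently every off-diagonal entry $A_\tau$ with $|B(\tau)|=2$ is at most $1$, and the gluing pattern of the edges is acyclic. Ordering the edges so that each $e_j$ with $j\ge 2$ meets the union of its predecessors in at most one vertex, each new edge contributes at most $k-1$ fresh vertices while the first contributes $k$; writing $c\ge 1$ for the number of components, this yields the Euler-type identity $n=(k-1)r+c$, whence $r\le(n-1)/(k-1)\le\lceil n/(k-1)\rceil$. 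Feeding this into the free bound gives $\cprk(\A)\le\cprk_b(\A)\le r\le\lceil n/(k-1)\rceil$.

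The main obstacle lives entirely in part (2): extracting from the definition of \emph{essential $(0,1)$ tensor} that the associated hypergraph is genuinely acyclic (a hyperforest), not merely linear. Linearity alone is insufficient, since a connected linear hypergraph can carry far more than $\lceil n/(k-1)\rceil$ edges — already for $k=2$ a clique is simple yet its incidence matrix has binary cp rank vastly exceeding $n$ — so the count collapses unless essentiality supplies the acyclicity that forces $r=(n-c)/(k-1)$. I would therefore spend the bulk of the effort verifying that each appended edge introduces at least one truly new vertex (no cycle closes), so that the identity $n=(k-1)r+c$ is exact; the ceiling and the passage to $\cprk$ are then routine. I would also revisit the ``no trivial slice'' convention invoked in part (1), since without it a single isolated vertex already breaks the equality $n=mr$.
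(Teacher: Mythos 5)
Your part (1) is correct and follows essentially the same route as the paper: the free chain $\cprk(\A)\le\cprk_{b}(\A)\le r$, then the observation that a shared support element $k\in S_{p}\cap S_{q}$ forces the diagonal entry $A_{kk\ldots k}=\sum_{j}a_{kj}^{m}\ge 2$, so the supports are pairwise disjoint and partition $[1\ldots n]$ into $r$ blocks of size $m$, giving $n=mr$. Your caveat about isolated vertices is well taken: the paper silently assumes the union of the supports is all of $[1\ldots n]$, and without that convention the disjoint supports need not cover every index.

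The genuine gap is in part (2), and you have located it precisely --- but you have not closed it, and you should know that the paper does not close it either. The paper's entire proof of (2) is the observation that $\abs{S_{p}\cap S_{q}}\ge 2$ would force the entry indexed by $\si=(s,t,t,\ldots,t)$, with $s,t\in S_{p}\cap S_{q}$, to be at least $2$; it concludes $\abs{S_{i}\cap S_{j}}\le 1$ for all $i\ne j$ and then declares the proof complete, never deriving $r\le\lceil n/(k-1)\rceil$ from this linearity condition. As you correctly suspect, that implication fails: linearity only gives $r\binom{k}{2}\le\binom{n}{2}$, which is quadratic in $n$. Concretely, the seven lines of the Fano plane are $3$-subsets of $[1\ldots 7]$ pairwise meeting in one point; the tensor $\sum_{j=1}^{7}\al_{j}^{3}$ is $3$-uniform, $\bcp$, and has every off-diagonal entry equal to $0$ or $1$, yet it has $r=7>\lceil 7/2\rceil=4$, and a short support-size count on the diagonal shows its binary cprank really does exceed $4$. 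So the acyclicity you hope to extract from ``essential $(0,1)$'' does not follow from the only property the paper's argument actually uses (off-diagonal entries at most $1$); either that undefined term carries strictly more information, or the stated bound needs a different proof (or is false as stated). Your proposal is honest about needing the hyperforest identity $n=(k-1)r+c$ but does not establish it, so part (2) remains unproved in your write-up --- exactly as it does in the paper's.
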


\begin{proof}  (1). It is obvious that $\cprk(\A) \le \cprk_{b}(\A)$ since $\cprk(\A)$ is the least number for all the possible symmetric nonnegative
decomposition (\ref{eq: cp}), including the $\bcp$ decomposition.  Denote $S_{j}=\supp(\al_{j})$ for each $j\in [r]$. We want to show the second part of Item (1).
Suppose $\A$ is a (0,1) tensor and there is a pair of positive integers $(p, q) (1\le p < q\le r$) such that $S_{p}\cap S_{q}\neq \emptyset$.  We may assume that
$k\in S_{p}\cap S_{q}$, i.e.,  $a_{kp}=a_{kq}=1$ where $a_{ij}$ is the $i$th coordinate of $\al_{j}$.  Therefore
\[  A_{kk\ldots k}=\sum\limits_{j=1}^{r} a_{kj}^{m} \ge a_{kp}^{m} +a_{kq}^{m} =2 \]
a contradiction to our assumption that $\A$ is $(0,1)$.  Thus $S_{p}\cap S_{q}=\emptyset$ for all distinct $p, q\in [r]$.  It follows that
$\set{S_{1},S_{2},\ldots, S_{r}}$ forms a uniform division of $[1\ldots n]$ with each part possessing $m$ elements.  So $mr=n$, and thus
$\cprk_{b}(\A)\le r=\frac{n}{m}$. \\
\indent (2).  Let $\A$ be $\bcp$ and also an essential (0,1) tensor.  We now show that $\abs{S_{i}\cap S_{j}}\le 1$ for all distinct $i,j\in [r]$.
In fact, if there is a pair $(p, q): 1\le p < q\le r$ such that $\abs{S_{p}\cap S_{q}}\ge 2$, then there exist two different numbers $s,t\in [1\ldots n]$ such that
$s,t\in S_{p}\cap S_{q}$.  We show that there exists some $\si\in S(m,n)$ such that $A_{\si}\ge 2$.  Actually if we choose
$\si=(s,t,t,\ldots, t)\in S(m,n)$, then by (\ref{eq: cp}) we have
\beyy
A_{\si} \ge (\al_{i}^{m})_{\si} + (\al_{j}^{m})_{\si} =a_{si}a_{ti}^{m-1}+ a_{sj}a_{tj}^{m-1} =2
\eeyy
The last equality is due to the fact that $s,t\in S_{i}$ implies $a_{si}=a_{ti}=1$ and $s,t\in S_{j})$ implies $a_{sj}=a_{tj}=1$.  This complete the proof.
\end{proof}

\indent  Now we assume $\G=\G(\A)$ be a \pdg (without isolated vertices) associated with an essential (0,1) tensor $\A\in \SS_{m;n}$ with a decomposition
(\ref{eq: cp}) where each $\al_{j}$ is a (0,1) $n$-dimensional vector.  Denote $\A^{*}$ as the pattern of $\A$, i.e.,  $a^{*}_{\si}=1$ if  $a_{\si}\neq 0$
for any $\si\in S(m,n)$.  Then $\A$ is permutation similar to a direct sum of some irreducible tensors \cite{XLQC2016}, say,
\[ \A \sim_{p} \A_{1}\oplus \A_{2}\oplus \ldots \oplus \A_{r}  \]
where $\A_{j}\in \SF_{m,n_{i}}$ with $n_{1}+\ldots +n_{r+1}=n$. Here each $\A_{i}$ corresponds to a complete block. But the essential (0,1) tensor, if it is $\bcp$, associated
with a complete block is a tensor each of whose elements except the diagonal ones is 1. Now we consider any nonnegative tensor $\A\in \T_{m;n}$. If $\A$ is $\bcp$, then
$\A$ has a decomposition (\ref{eq: cp}) where $\al_{j}\in \Fb^{n}$ for each $j\in [r]$. An edge $\si=\set{i_{1},\ldots,i_{m}}\in S(m,n)$ is called a \emph{maximal edge} of a multi-hypergraph $\G=(V, \capEb)$ if $\G$ has no edge $\eps$ such that $B(\si)\subset B(eps)$.
We call a multi-hypergraph $\G$ an $r$-uniform multi-hypergraph if all its maximal edges have cardinality $r$.  A multi-hypergraph $\G=(V, \capEb)$ is said to have Property
$R$ if  $\D_{\al} \subseteq \capEb$ for any $\al\in \capEb$ where
\beq\label{eq: D4R}
 \D_{\al}=\set{\si\in \capEb: B(\si)\subseteq B(\al)}
 \eeq
Property $R$, first introduced in \cite{XLQC2016}, implies that $\G$ is uniquely determined by the set of its maximal edges. \\

\begin{exm}\label{exm04}
Let $\G$ be a 3-uniform 3-order multi-hypergraph with $V=\set{1,2,3,4}$ and a unique maximal edge $E_{m}=\set{1,3,4}$. Then the (multi-)edges of $\G$ are the following
\[ \set{1,3,4}, \set{1,1,3}, \set{1,3,3},\set{1,1,4},\set{1,4,4} \]
\[ \set{3,3,4}, \set{3,4,4}, \set{1,1,1},\set{3,3,3},\set{4,4,4}.\]

\noindent $\G$'s adjacent tensor is a strong symmetric (0,1)$ tensor \A=(A_{ijk})\in \Fb^{4\times 4\times 4}$ defined as
\beyy
    A(:,:,1) &=\left[\begin{array}{cccc} 1&0&1&1\\  0&0&0&0\\  1&0&1&1\\  1&0&1&1\end{array}\right], \quad &  A(:,:,2)=0 \\
    A(:,:,3) &=\left[\begin{array}{cccc} 1&0&1&1\\  0&0&0&0\\  1&0&1&1\\  1&0&1&1\end{array}\right], \quad &
   A(:,:,4) =\left[\begin{array}{cccc} 1&0&1&1\\  0&0&0&0\\  1&0&1&1\\  1&0&1&1\end{array}\right]
\eeyy
It is easy to verify that $\A=\al_{1}^{3}$ where $\al=(1,0,1,1)^{\top}$.
\end{exm}

\indent  A \pdg\ $\G$ is called a \emph{cp pseudograph} if its adjacency tensor $\A(\G)$ is $\bcp$.  In Example \ref{exm04} the \pdg\ $\G$ has 10 edges, among
which there is one \emph{normal edge} $E=\set{1,3,4}$, which is also a maximal edge.  Actually for any $m$-uniform \pdg\  $\G$ of size $n$, the largest number of
maximum normal edges is ${n\choose m}$ among its $n^{m}$ (multi-)edges.  Consider the ratio of the number of normal $m$-edges to the number of (multi-)edges, i.e.,
\[
R(m,n) := \frac{{n\choose m}}{n^{m}}.
\]
$R(m,n) $ converges to $R_{m}:=\frac{1}{m!}$ when $n\to \infty$.  For example,  $R_{3}=1/6, R_{4}=1/24, R_{5}=1/120, \ldots$.
This implies that a \pdg\ is much more complicated than a hypergraph. \\

\begin{cor}\label{cor05}
Let $\G=(V, \capEb)$ be an $m$-order multi-hypergraph with $V=[1\ldots n]$ with $m, n >1$.   If $\G$ possesses property $R$ and has a unique nonempty maximal edge,
then $\G$ is a cp \pdg.
\end{cor}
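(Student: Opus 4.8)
The plan is to show directly that the adjacency tensor $\A(\G)$ is a single $\set{0,1}$ rank-one $m$th power, so that $\G$ is a cp \pdg\ with $\cprk_{b}(\A(\G))=1$. First I would fix the unique nonempty maximal edge, call it $\al^*$, and put $S:=B(\al^*)\subseteq[1\ldots n]$. The first and main task is to identify the whole edge set $\capEb$ in terms of $S$. On the one hand, Property $R$ applied to the edge $\al^*$ (see (\ref{eq: D4R})) guarantees that every $m$-tuple $\si$ with $B(\si)\subseteq S$ is again an edge of $\G$. On the other hand, I would argue that no edge can have support outside $S$: if $\tau\in\capEb$ had $B(\tau)\not\subseteq S$, then starting from $\tau$ and repeatedly passing to an edge of strictly larger support—which is possible precisely when the current edge is not maximal, and which must terminate since $V$ is finite—one reaches a maximal edge; by the uniqueness hypothesis this maximal edge is $\al^*$, forcing $B(\tau)\subseteq B(\al^*)=S$, a contradiction. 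Hence
\[ \capEb=\set{\si\in S(m,n):\ B(\si)\subseteq S}. \]

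With the edge set pinned down, the adjacency tensor satisfies $A_{\si}=1$ exactly when $B(\si)\subseteq S$ and $A_{\si}=0$ otherwise. I would then introduce the characteristic $(0,1)$-vector $\bv\in\Fb^{n}$ of $S$, namely $v_i=1$ for $i\in S$ and $v_i=0$ for $i\notin S$, and observe that $(\bv^{m})_{i_1\ldots i_m}=v_{i_1}\cdots v_{i_m}$ equals $1$ precisely when every $i_k$ lies in $S$, that is, when $B(\si)\subseteq S$. Therefore $\A(\G)=\bv^{m}$, which is a decomposition of the form (\ref{eq: cp}) with a single summand and $\bv\in\Fb^{n}$. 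By the definition of a $\bcp$ tensor this shows $\A(\G)$ is $\bcp$ with $\cprk_{b}(\A(\G))=1$, and consequently $\G$ is a cp \pdg. This is the exact analogue of Lemma \ref{le1:bcprk4diag}(2), with the all-ones vector $\bfe$ replaced by the indicator of $S$, and Example \ref{exm04} is the special case $m=3$, $S=\set{1,3,4}$.

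The only genuinely delicate point is the characterization of $\capEb$, and specifically the step ruling out any edge whose support escapes $S$. The subtlety is that Property $R$ by itself only furnishes downward closure \emph{below} each maximal edge, so to exclude edges with support incomparable to or exceeding $S$ I must combine the hypothesis that $\al^*$ is the \emph{unique} maximal edge with the finiteness-of-$V$ ascent that lifts any non-maximal edge to a maximal one. Once this identification of $\capEb$ is secured, the construction of $\bv$ and the verification $\A(\G)=\bv^{m}$ are immediate, and the complete positivity of $\G$ follows at once.
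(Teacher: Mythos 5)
Your proposal is correct and follows essentially the same route as the paper: identify the support $S$ of the unique maximal edge, use Property $R$ to show every $m$-tuple supported in $S$ is an edge and uniqueness of the maximal edge to show no edge escapes $S$, and conclude $\A(\G)=\bv^{m}$ for the indicator vector $\bv$ of $S$, so $\cprk_{b}(\A(\G))=1$. Your explicit finite-ascent argument for lifting a non-maximal edge to the (unique) maximal one merely spells out a step the paper leaves implicit.
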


\begin{proof}
Denote $\G=(V,\capEb)$ and assume that the unique maximal edge $E$ has the base set
\[
B(E)=\set{i_{1},i_{2},\ldots,i_{r}},\quad  1\le i_{1}<i_{2}<\ldots <i_{r}\le n, 1\le r\le m. \]
Let $\A=\A(\G)=(A_{\si})\in \Fb_{m;n}$ be its adjacency tensor.  Then $\A$ is a (0,1) tensor.  We now prove that $\A$ is actually a $\bcp$ tensor
with $\cprk_{b}(\A)=1$, that is, there is a (0,1) vector $\al\in \Fb^{n}$ such that  $\A=\al^{m}$.  For this purpose, we denote
\[ D_{E}=\set{\si=(j_{1},j_{2},\ldots,j_{m}): B(\si)\subseteq B(E)}  \]
and let $\al=(a_{1},a_{2},\ldots,a_{n})^{\top}\in \Fb^{n}$ such that  $\supp(\al)=B(E)=\set{i_{1},i_{2},\ldots,i_{r}}$.  Thus $a_{j}=1$ if and only if $j=i_{k}$ for
some  $k\in [r]$.  It suffices to show that $\A=\al^{m}$, i.e.,
\beq\label{cor05prf}
A_{j_{1}j_{2}\ldots j_{m}}=a_{j_{1}}a_{j_{2}}\ldots a_{j_{m}} \quad \forall  \si=(j_{1},j_{2},\ldots, j_{m})\in S(m,n)
\eeq
In fact, since  $a_{j_{1}}a_{j_{2}}\ldots a_{j_{m}}=(\al^{m})_{\si}=1$ for any $\si=(j_{1},j_{2}, \ldots, j_{m})\in S(m,n)$. 
It follows that $B(\si)\subseteq \supp(\al)=B(E)$ and thus $\si\in D_{E}$, which implies $\si\in \capEb$ (since $\A$ has Property $R$).  
Consequently we have $A_{\si}=1$.\\
\indent Conversely we let  $A_{\si}=1$ for some $\si=(j_{1},j_{2},\ldots,j_{m})\in S(m,n)$, which is equivalent to $\si\in \capEb$. Thus 
\[ B(\si)\subseteq B(E)=\set{i_{1},i_{2},\ldots,i_{r}}=\supp(\al) \]
since $E$ is the unique maximal edge of $\G$.  It follows that 
\[ (\al^{m})_{\si}=a_{j_{1}}a_{j_{2}}\ldots a_{j_{m}}=1. \]
The proof is completed.
\end{proof}

\indent  A \pdg\ $\G$ with $n$ vertices and $N$ edges is called an $n\times N$ \emph{multi-hypergraph}.  Usually the maximal edges are not unique.
In \cite{XLQC2016}, we define the \emph{indicator} of  an edge $\al$ of $\G$ as the vector $I_{\al}:= (w_{1},\ldots, w_{n})^{\top}$ in $\Zplus^{n}$ where $w_{i}$
denotes the frequency of vertex $i$ in $\al$.  An $n\times N$ \pdg\ $\G$ is uniquely determined by an $n\times N$ nonnegative integral matrix
\[  W=W(\G):= [\bu_{1},\ldots, \bu_{N}]  \]
where $\bu_{j}\in \Zplus^{n}$ is the indicator of $\al_{j}\in \capEb$.  $W$ is called \emph{the adjacency matrix} of $\G$.  Now we form matrix $A$ associated with
$W$ by
\beq\label{asscpmatrx}
 A = WW^{\top}
\eeq
$A$ can be written equivalently as
\[
A = \sum\limits_{j=1}^{N}  \bu_j^2 = \sum\limits_{j=1}^{N}  \bu_j \bu_j^{\top}  \]
which is exactly a $\bcp$\  matrix when each $\bu_j$ is a (0,1) vector (\cite{BX2005}).  $A$ is called an $k$-uniform cp matrix if $\abs{\supp(W)}=k$, and $\A$
is called an $k$-uniform $n\times m$ tensor of rank $R$ if $\A$ has a $\bcp$\ decomposition (\ref{eq: cp}).  \\

Denote
\[ \C_{\al} :=\set{ \be\in \capEb:  \be \sim \al}, \quad \texttt{and}\  \D_{\al} :=\set{ \be\in \capEb:  \be \prec \al } \]
for any edge $\al\in \capEb$.  Let $\Gamma_{\G}: =\set{\al_j  | j=1,2,\ldots, r}$ be the set of the maximal edges of $\G$.  Then
$\set{\D_{\al_j}: j=1,2,\ldots, r}$ forms a partition of $\capEb$.  A \pdg\ $\G=(V,\capEb)$ is said to have Property $R$ if  $D_{\al}\subseteq \capEb$ whenever $\al\in \capEb$.
In \cite{XLQC2016} we show that a (0,1) \mnst  $\A$ is $\bcp$ if and only if  $\cP$ possesses Property
$R$ where $\cP=\cP(\A)$.  We have shown in \cite{XLQC2016} that a (0,1) tensor $\A$ is $\bcp$  if and only if  $\A$ can be written as the direct sum of some all-ones blocks. This is
equivalent to
\beq\label{eq: disjointsupp}
S_{i} \cap S_{j} = \emptyset, \forall 1\le i < j\le r
\eeq
where $S_{k}:=\supp(\bu_{k})$ and $r$ is the smallest number for (\ref{eq: cp}) to hold. \\
\indent Given an $n\times N$ \pdg\ $\G$.  We let $\A$ denote the tensor generated by the
Khartry-Rao product of  $W\equiv W(\G)=[\bu_1,\ldots, \bu_N]$, i.e.,
$\A = \overbrace{W\circ W\circ \ldots \circ W}^m$, which is defined as (\ref{eq: cp}).\\ 
\indent We note that a $\bcp$ tensor may not be a (0,1) tensor, and a (0,1) tensor can be a non-$\bcp$ even though it is a cp tensor.\\

\indent  A multi-hypergraph $\cP =(V, \capEb)$ is called an $m\times n$ \pdg\ if
$\abs{V}=n$ and each edge of $\cP$ is an $m$-multiset of $V$. Let $\capEb_{k}$ be
the subset of $\capEb$ each of whose largest elements has exactly $k$ distinct elements.
We let $\cP_{k}:=(V, \capEb_{k})$. For an $k$-uniform CP tensor $\A\in \cp_{m;n}$, its
associated \pdg $\cP$ always has a $k$-base.\\
\indent  Given a tensor $\A=(A_{\si})\in \T_{m;n}$,  A \emph{tensor pattern} $\tA=(\tilde{A}_{\si})\in \Fb_{m,n}$ is a (0,1) tensor satisfying
\[
\tilde{A}_{\si}=1\Leftrightarrow A_{\si}\neq 0, \quad   \forall  \si\in S(m,n)
\]
An \mnrt $\A=(A_{\si})\in \T_{m;n}$ is called a \emph{reducible} tensor if there is a proper subset $\cI\subset [1\ldots n]$ such that
\beq\label{eq:defreducible}
a_{i_{1}\ldots i_{m}} =0, ~~\forall i_{1}\in \cI,~\forall i_{2}, \ldots,i_{m} \notin \cI.
\eeq
$\A$ is called \emph{irreducible} if it is not reducible.

\indent  Reducibility is a pattern property for tensors. By employing the permutational similarity property, we can decompose any $(0,1)$ reducible tensor into a direct sum of a finite number of low dimensional irreducible tensors and a zero tensor in the permutational similar sense. Before stating this result, some related concepts are recalled here. Let $\A, \B\in \T_{m;n}$. We say that $\A$ is \emph{permutational similar} to $\B$, denoted as $\A\sim_{p} \B$,  if there exists a permutation matrix $P\in RR^{n\times n}$
such that
\[ \B = \A\times_{1} P\times_{2}  P\times_{3} \cdots \times_{m} P, \]
where $\tA:=\A\times_{k} P=(\tilde{a}_{i_{1}\ldots i_{m}})\in \T_{m;n}$ is defined as
\[ \tilde{a}_{i_{1}\ldots i_{k-1}i_{k}i_{k+1}\ldots i_{m}} = \sum\limits_{j=1}^{n} a_{i_{1}\ldots i_{k-1}j i_{k+1}\ldots i_{m}}p_{i_{k}j} \]

\vskip 2mm

Utilizing the permutational similarity of tensors, we can build up some identical relation among their corresponding multi-hypergraphs. Let $\cP_{1}=(V_{1}, \capEb_{1})$ and $\cP_{2}=(V_{2}, \capEb_{2})$ be two given $m$-uniform multi-hypergraphs with their $(0,1)$ associated tensors $\A$ and $\B$ respectively.
Then $\A\sim_{p} \B$ if and only if there exists a bijection $\phi$ from $V_{1}$ to $V_{2}$ such that
\[  \{i_{1},i_{2},\ldots,i_{m}\}\in \capEb_{1}  \mapsto \{\phi(i_{1}),\phi(i_{2}),\ldots,\phi(i_{m})\}\in \capEb_{2} \]
that is, $\cP(\B)$ is the multi-hypergraph obtained from $\cP(\A)$ by the reordering of its vertices, and thus they are identical in this sense. \\

Let $\A_{i}=(a_{\si}^{(i)})\in \T_{m,n_{i}}, i=1,2$ and $n_{1}+n_{2}=n$.  The \emph{direct sum} of $\A_{1}$ and $\A_{2}$, denoted by
\[ \A = \A_{1}\oplus \A_{2} =(a_{i_{1}\ldots i_{m}}), \]
is defined by
\[ a_{i_{1}\ldots i_{m}}=\begin{cases} a_{i_{1}\ldots i_{m}}^{(1)}  & \text{if} i_{1},\ldots, i_{m}\in [n_{1}], \\
a_{i_{1}\ldots i_{m}}^{(2)}  & \text{if} i_{1},\ldots, i_{m}\in n_{1}+[n_{2}], \\
      0& \text{otherwise}.
\end{cases}  \]
Here $a+S$ is defined as the translation of set $S$,  i.e., $a+S=\set{a+s: s\in S}$.\\

\vskip 2mm

Now we are in a position to describe the decomposition for tensors in the sense of permutation similarity.

\begin{lem}\label{lem1}
Let $\A\in \SF_{m,n}$, where $m\ge 2, n\ge 1$. Then
\beq\label{eq:irredufact}
\A\sim_{p} \A_{1}\oplus \A_{2}\oplus \ldots \oplus \A_{r}\oplus \mathcal{O}_{r+1}
\eeq
where $\A_{i}\in \SF_{m,n_{i}}$ is irreducible, $\mathcal{O}_{r+1}$ is a zero tensor of order $m$ and dimension $n_{r+1}$, and $n_{1}+\ldots + n_{r+1}=n$.
\end{lem}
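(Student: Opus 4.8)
The plan is to prove the decomposition by grouping $[1\ldots n]$ into the connected components of the support pattern of $\A$ and then choosing a permutation that lists the vertices of each component consecutively; this mirrors the reduction of a symmetric matrix to block-diagonal form by its connected components.

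First I would fix the right notion of connection for a symmetric tensor. Call indices $i,j\in[1\ldots n]$ \emph{adjacent} if some nonzero entry $A_{i_1\ldots i_m}$ has both $i$ and $j$ in its base set $B(i_1,\ldots,i_m)$, and let $\sim$ be the reflexive--transitive closure of adjacency. Because $\A$ is symmetric this is a well-defined equivalence relation, and its classes partition $[1\ldots n]$. I would then split the classes into those $C$ with $\A[C]\neq\mathcal{O}$ (the classes on which $\A$ is supported; any class of size $\ge 2$ is of this type) and the singletons $\{i\}$ for which $i$ occurs in no nonzero entry — in particular $A_{ii\ldots i}=0$ — i.e.\ the isolated vertices. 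Relabel the supported classes $C_1,\ldots,C_r$ with $|C_k|=n_k$ and collect the isolated vertices into $C_{r+1}$ with $|C_{r+1}|=n_{r+1}$, so that $n_1+\cdots+n_{r+1}=n$.

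Next I would realize the permutation similarity. Let $P$ be the permutation matrix of any bijection of $[1\ldots n]$ that sends $C_1,\ldots,C_{r+1}$ onto consecutive index ranges, and set $\B=\A\times_1 P\times_2 P\cdots\times_m P$, so $\B\sim_p\A$ and $\B\in\SF_{m,n}$, since simultaneous relabelling of all $m$ modes preserves symmetry and the $\{0,1\}$ entries. The heart of the argument is one observation that kills every off-diagonal block: if $A_{i_1\ldots i_m}\neq 0$ then all of $i_1,\ldots,i_m$ co-occur in this single entry, hence are pairwise adjacent and lie in one class $C_k$. Thus no nonzero entry of $\A$, and therefore none of $\B$, has its indices drawn from two different classes, which is exactly the condition that $\B=\A_1\oplus\cdots\oplus\A_r\oplus\mathcal{O}_{r+1}$, where $\A_k$ is the principal subtensor of $\B$ on the $k$-th index block (equivalently $\A[C_k]$ up to the relabelling), itself symmetric and $\{0,1\}$, and $\mathcal{O}_{r+1}$ is the zero block carried by the isolated vertices.

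It remains to check that each $\A_k$ is irreducible, and this is the step I expect to be the main obstacle. By construction $C_k$ is a single $\sim$-class, so it admits no partition into two nonempty parts with every straddling entry zero; this ``no separating subset'' property is precisely irreducibility in the symmetric sense. The care needed is that for $m\ge 3$ the one-sided vanishing condition in the general definition of reducibility (entries with the first index in $\cI$ and the rest outside) is strictly weaker than disconnection, so one must use the full symmetry of $\A$ to argue that a connected block really has no separating subset $\cI\subset C_k$; this is the only subtle point. The rest is routine bookkeeping — that restriction preserves strong symmetry and the $\{0,1\}$ property, that $P$ indeed produces the claimed block ordering, and that $n_1+\cdots+n_{r+1}=n$ — and the degenerate cases $\A=\mathcal{O}$ (take $r=0$) and $n=1$ are immediate.
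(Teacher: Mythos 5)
Your route is genuinely different from the paper's: you build the blocks directly as the connected components of a co-occurrence relation on $[1\ldots n]$ and then permute them into consecutive position, whereas the paper argues by induction on the dimension, using a reducing subset $\cI$ to split the permuted tensor as $\A_{11}\oplus\A_{22}$ and recursing. Your component construction does correctly deliver a permutation under which $\A$ becomes a direct sum of principal blocks (one per supported class) plus a zero block on the isolated vertices, because, as you observe, every nonzero entry has all of its indices inside a single class.

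The gap is exactly at the step you flag and then leave open: that each connected block is irreducible. With the paper's definition of reducibility (there is a nonempty proper $\cI$ with $a_{i_{1}\ldots i_{m}}=0$ whenever $i_{1}\in\cI$ and $i_{2},\ldots,i_{m}\notin\cI$), connectedness does \emph{not} imply irreducibility for $m\ge 3$, and ordinary symmetry cannot rescue this. Concretely, take $m=3$, $n=2$ and $\A\in\SF_{3,2}$ with $A_{112}=A_{121}=A_{211}=1$ and all other entries $0$. The two indices co-occur in a nonzero entry, so your construction returns the single class $\set{1,2}$; yet $\A$ is reducible via $\cI=\set{1}$ (the only entries that must vanish are those with base pattern $A_{122}$, and they do), and $\A$ is not permutation-similar to any direct sum of smaller blocks since $A_{112}\neq 0$. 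So the assertion you would need, namely that a connected class admits no separating subset, is false as stated; indeed the example shows the lemma itself fails for general symmetric $(0,1)$ tensors under this literal definition. The paper's inductive proof carries the mirror image of the same gap: it asserts that the one-sided vanishing condition for $\cI$ yields $\tA=\A_{11}\oplus\A_{22}$, which likewise requires \emph{all} straddling entries to vanish, not just those with a single index in $\cI$. Both arguments become correct once one assumes the tensors in play are adjacency tensors of multi-hypergraphs with Property $R$ (entries determined by base sets and downward closed), for then $A_{\si}\neq 0$ with $i\in B(\si)\cap\cI$ and $j\in B(\si)\setminus\cI$ forces $A_{ij\ldots j}\neq 0$, contradicting reducibility; under that hypothesis your component argument closes cleanly and is, if anything, more transparent than the induction.
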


\begin{proof} The result is trivial if $\A$ is irreducible tensor. Now we assume that $\A\in \SF_{m,n}$ is a reducible tensor. We will use induction to prove the desired statement. For $n=1$, the reducibility implies that $\A=0$. The statement holds by setting $r=0$. Assume that for all $k$ satifying $1\leq l\leq n$ with $n \geq 1$, the statement holds. They for the case of $l+1$, there exists a nonempty subset $I$ of $[l+1]$ such that
\beq\label{eq:leprf1}
 A_{i_{1}i_{2}\ldots i_{m}}=0, \forall i_{1}\in I, i_{2},\ldots, i_{m}\notin I
\eeq
Let $\cP=(V, \capEb)$ be the multi-hypergraph with $\A$ as an associated tensor, and we assume w.l.g. that
\[ I:=\set{k_{1}, k_{2},\ldots, k_{r}}, 1\le k_{1} <k_2 <\cdots < k_r\le l+1.\]
Then we let $\phi: [l+1]\to [l+1]$ be an one-to-one correspondence such that
\[ \phi(k_{i})=i, \quad  \forall\  i=1,2,\ldots, r. \]
and $\phi$ maps $[l+1]\backslash I$ to $[l+1]\backslash [r]$.   $\phi$ can be regarded as a permutation on $[l+1]$, and so there is a permutation matrix $P$ corresponding to $\phi$.
Actually if we define $P=(p_{ij})\in \Fb^{(l+1)\times (l+1)}$ by
\[ p_{ij}=1 \quad \texttt{iff}\  j=\phi(i)  \]
for each $i\in [l+1]$. It follows readily that
\beq\label{eq:directsum}
\tA:= \A\times_{1}P\times_{2}P\times_{3} \ldots \times_{m}P  = \A_{11}\oplus \A_{22}
\eeq
where $\A_{11}\in \SF_{m,r}, \A_{22}\in \SF_{m,l+1-r}$. Note that $r$, $l+1-r\leq n$,  the desired decomposition can be proved by the induction.
\end{proof}

\indent Lemma \ref{lem1} shows that a tensor $\A\in \SF_{m,n}$ can always be decomposed into the direct sum of irreducible tensors, possibly with a zero block.  The following lemma is dedicated to the necessary and sufficient conditions of $\bcp$ property for irreducible $(0,1)$ tensors.\\

\begin{lem}\label{lem2}
Let $m\ge 2, n\ge 1$ be two integers and $\A\in \SF_{m,n}$ be irreducible.  Then the following statements are equivalent:
\begin{enumerate}
\item[(1)] $\A$ is $\bcp$.
\item[(2)] $\A=\J$ is an all-$1$ tensor.
\item[(3)] The multi-hypergraph $\cP$ associated with tensor $\A$ is a complete block.
\end{enumerate}
\end{lem}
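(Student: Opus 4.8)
The plan is to reduce the three-way equivalence to the two facts $(2)\Leftrightarrow(3)$ and $(1)\Leftrightarrow(2)$, since statements $(2)$ and $(3)$ are the same assertion phrased in tensor versus hypergraph language. For $(2)\Leftrightarrow(3)$ I would simply unwind definitions: $\A=\J$ means $A_\si=1$ for every $\si\in S(m,n)$, which by the definition of the associated multi-hypergraph $\cP=\cP(\A)$ says precisely that every $m$-multiset of $V=[1\ldots n]$ is an edge, i.e. $\cP$ is a complete block; conversely a complete block forces $A_\si=1$ for all $\si$, hence $\A=\J$. No real work is needed here.

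The implication $(2)\Rightarrow(1)$ is immediate from Lemma~\ref{le1:bcprk4diag}(2): the all-ones tensor equals $\bfe^m$ and is therefore $\bcp$, indeed with $\cprk_b=1$. Thus everything reduces to the one substantive implication $(1)\Rightarrow(2)$.

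For $(1)\Rightarrow(2)$ I would argue as follows. Since $\A$ is $\bcp$, write $\A=\sum_{j=1}^{r}\al_j^m$ with $\al_j\in\Fb^n$ and $r$ minimal, and set $S_j=\supp(\al_j)$, writing $a_{ij}$ for the $i$th coordinate of $\al_j$. The key first step exploits that $\A$ is a $(0,1)$ tensor: for each vertex $k$ the diagonal entry satisfies $A_{kk\ldots k}=\sum_{j=1}^{r}a_{kj}^m=\sum_{j=1}^{r}a_{kj}\le 1$, using $a_{kj}\in\set{0,1}$, exactly as in the proof of Theorem~\ref{le: kbasegrph}(1). Hence each $k$ lies in at most one support, so the $S_j$ are pairwise disjoint; let $U=\bigcup_j S_j$. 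The second step converts this combinatorial picture into reducibility. If $r\ge 2$, take $\cI=S_1$: this is proper and nonempty, and for $i_1\in\cI$ with $i_2,\ldots,i_m\notin\cI$ the entry $A_{i_1\ldots i_m}=\sum_j\prod_t a_{i_t j}$ vanishes, since a surviving term would require all indices in a common $S_j$, impossible by disjointness once $i_1\in S_1$ and $i_2\notin S_1$. If instead $U\ne[1\ldots n]$, any $v\notin U$ yields a zero slice, so $\cI=\set{v}$ witnesses reducibility. Either case contradicts the irreducibility of $\A$. Therefore $r=1$ and $S_1=[1\ldots n]$, so $\al_1=\bfe$ and $\A=\bfe^m=\J$, which is $(2)$.

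The main obstacle is precisely this second step: turning ``pairwise disjoint supports covering all of $[1\ldots n]$'' into clean contradictions with the definition~(\ref{eq:defreducible}) of reducibility, while checking in each subcase that the chosen $\cI$ is simultaneously nonempty and proper. One degenerate possibility must also be excluded, namely $r=0$ (equivalently $\A=\mathcal{O}$), which would satisfy $(1)$ vacuously without being $\J$; this is consistent with Lemma~\ref{lem1}, where the zero block is peeled off separately, so throughout we may assume $\A$ is a nonzero irreducible block and hence $r\ge 1$. Once the diagonal bound $A_{kk\ldots k}\le 1$ is in hand the remainder is routine bookkeeping.
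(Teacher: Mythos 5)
Your proof is correct and takes essentially the same route as the paper's: the diagonal bound $A_{kk\ldots k}\le 1$ forces the supports $S_j$ to be pairwise disjoint, and disjointness together with $r\ge 2$ (or a non-covering union $U\ne[1\ldots n]$) contradicts irreducibility, leaving $\A=\bfe^m=\J$. The only real difference is that you exhibit the reducing set $\cI=S_1$ directly against the definition (\ref{eq:defreducible}), where the paper routes the same fact through a permutational-similarity direct-sum decomposition of the associated multi-hypergraph; your observation that the degenerate case $r=0$ (the zero tensor, which is irreducible and vacuously $\bcp$ when $n=1$ yet not equal to $\J$) must be excluded is a detail the paper silently omits.
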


\begin{proof}
If $\A=\J$, then $\A$ is $\bcp$ since $\A=\allone^{m}$ with $\allone=(1,1,\ldots,1)^{\top}$.
Conversely,let $\A\in \SF_{m,n}$ be a $\bcp$ tensor. Then $\A$ has a decomposition (\ref{eq: cp}) with
\[ \bu_{j}=(u_{1j},u_{2j},\ldots, u_{nj})^{\top}\in \Fb^{n}. \]
Then we have
\[a_{i_{1}i_{2}\ldots i_{m}} = \sum\limits_{j=1}^{q} u_{i_{1}j}u_{i_{2}j}\ldots u_{i_{m}j}, ~~ \forall (i_{1},i_{2},\ldots,i_{m})\in S(m,n).  \]
We will first show that $q=1$ in decomposition (\ref{eq:cpu01}).  Suppose that $q>1$.  If there exist a pair of positive integers $(s,t): 1\le s < t \le q$ such that
\[  k\in \supp(\bu_{s})\cap \supp(\bu_{t})  \]
for some $k\in [1\ldots n]$, then $u_{ks}=u_{kt}=1$. Hence we have
\beyy
A_{kk\ldots k} & = & \sum\limits_{j=1}^{q} u_{kj}u_{kj}\ldots u_{kj} \\
                          & = & \sum\limits_{j=1}^{q} u_{kj}^{m}\\
                          &\ge & u_{ks}^{m} + u_{kt}^{m}=2
\eeyy
a contradiction to the assumption that $\A$ is a (0,1) tensor.  Thus we have
\beq\label{eq: disjointsupp}
\supp(\bu_{i})\cap \supp(\bu_{j}) = \emptyset, \forall 1\le i < j\le q
\eeq
Now we define
\[ \D_{i}=\set{\si\in \capEb: B(\si)\subseteq \supp(\bu_{i})}, \forall i=1,2,\ldots,q \]
Then we get $\set{\D_{1},\D_{2}, \ldots,\D_{q}}$ each a subset of $\capEb$,  and
\[ \D_{i}\cap \D_{j}=\emptyset, \quad   \forall 1\le i < j\le q   \]
Denote $V_{i}=V(\D_{i})$ and $\cP_{i}:=(V_{i}, \D_{i})$ for $i=1,2,\ldots,q$.  Then
\[ \cP = \cP_{1}\cup \cP_{2}\cup \ldots \cup \cP_{q} \]
where $\cP=(V, \capEb)$ is the multi-hypergraph associated with $\A$. It turns that
$\A\sim_{p} \A_{1}\oplus \ldots \oplus\A_{q}$ where $\A_{i}$ is the adjacency tensor of $\cP_{i}$, a
contradiction to the hypothesis that $\A$ is irreducible.  Hence $q=1$,  and thus there exists a vector $\bu=(u_{1},\ldots,u_{n})^{\top}\in \Fb^{n}$ such that $\A=\bu^{m}$. \\
\indent To prove that $\A=\J=\allone^{m}$, we need only to show that  $\supp(\bu)=[1\ldots n]$. In fact, if $\supp(\bu)$ is a proper subset of $[1\ldots n]$, then by setting
$I=[1\ldots n]\backslash \supp(\bu)$, we show that $\A$ is reducible by definition, which is a contradiction to the hypothesis. Thus $\supp(\bu)=[1\ldots n]$ and  $\A=\J$. Thus the equivalence between (i) and (ii) is obtained.
\indent The remaining part of the lemma is immediate by definition.
\end{proof}

 \indent From Lemma \ref{lem2} and its proof, we can get the following equivalent conditions for $\bcp$ tensors.
\begin{thm}\label{thm1}
Let $m\ge 2, n\ge 1$ be two positive integers. Suppose that $\A\in \SF_{m,n}$ have no zero blocks and is associated with multi-hypergraph $\cP=(V,\capEb)$.  Then the following are equivalent:
\begin{description}
\item[(1)] $\A$ is $\bcp$\  tensor.
\item[(2)] $\cP$ can be decomposed as the union of some complete blocks $\cP_{i}$ of size $n_{i}$ where $n_{1}+\ldots +n_{q}=n$.
\item[(3)] $\A$ can be written in form (\ref{eq:cpu01}) and with $\bu_j\in\Fb^n$ satisfying  $U^{T}U=diag(n_{1},\ldots,n_{q})$ where $U=[\bu_{1},\ldots,\bu_{q}]$.
\end{description}
\end{thm}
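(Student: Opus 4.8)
The plan is to prove the implications in the cycle $(1)\Rightarrow(2)\Rightarrow(3)\Rightarrow(1)$, using Lemma \ref{lem1} (reduction to irreducible summands) and Lemma \ref{lem2} (an irreducible $(0,1)$ tensor is $\bcp$ exactly when it is all-ones, equivalently a complete block) as the two main engines. The implication $(3)\Rightarrow(1)$ is immediate: a representation of the form (\ref{eq:cpu01}) with every $\bu_j\in\Fb^n$ is by definition a $\bcp$ decomposition of $\A$, so the diagonal shape of $U^{\top}U$ plays no role in this direction.

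For $(1)\Rightarrow(2)$ I would first apply Lemma \ref{lem1}. Since $\A$ has no zero blocks, the trailing zero tensor is absent and we obtain $\A\sim_{p}\A_{1}\oplus\cdots\oplus\A_{q}$ with each $\A_{i}\in\SF_{m,n_{i}}$ irreducible and $n_{1}+\cdots+n_{q}=n$. Next I would observe that permutation similarity preserves the $\bcp$ property, since permuting the coordinates of each generating $(0,1)$ vector again yields $(0,1)$ vectors; hence the permuted tensor is still $\bcp$. Each $\A_{i}$ is a principal subtensor of that tensor, so it is itself $\bcp$ because principal subtensors of $\bcp$ tensors are $\bcp$ \cite{XLQC2016}. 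Being irreducible and $\bcp$, each $\A_{i}$ must, by Lemma \ref{lem2}, equal the all-ones tensor $\J$ of dimension $n_{i}$, whose associated multi-hypergraph $\cP_{i}$ is a complete block. After undoing the permutation this exhibits $\cP=\cP_{1}\cup\cdots\cup\cP_{q}$ as a union of complete blocks of sizes $n_{1},\ldots,n_{q}$, which is $(2)$.

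For $(2)\Rightarrow(3)$ I would let $V_{i}$ be the vertex set of the complete block $\cP_{i}$, with $\abs{V_{i}}=n_{i}$, and take $\bu_{i}\in\Fb^{n}$ to be its indicator vector. The adjacency tensor of a complete block on $V_{i}$ is precisely $\bu_{i}^{m}$, so $\A=\sum_{i=1}^{q}\bu_{i}^{m}$, a representation of form (\ref{eq:cpu01}). Because the complete blocks partition $V=[1\ldots n]$, the supports $V_{i}$ are pairwise disjoint, whence $\bu_{i}^{\top}\bu_{j}=\abs{V_{i}\cap V_{j}}=\delta_{ij}n_{i}$; that is, $U^{\top}U=\mathrm{diag}(n_{1},\ldots,n_{q})$ with $U=[\bu_{1},\ldots,\bu_{q}]$.

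The step I expect to be the crux is the middle of $(1)\Rightarrow(2)$, namely transferring the $\bcp$ property down to the irreducible summands: one must check that permutation similarity carries a $\bcp$ decomposition to a $\bcp$ decomposition, and only then invoke the inheritance of $\bcp$ by principal subtensors, so that Lemma \ref{lem2} becomes applicable. Everything downstream is then automatic, and the remaining work is bookkeeping with indicator vectors. One subtlety worth flagging is that the partition of $V$ in $(2)$ (and hence $\sum_{i}n_{i}=n$) relies on $\A$ being a genuine $(0,1)$ tensor with every diagonal entry equal to $1$: a vanishing diagonal entry $A_{ii\ldots i}$ would make $\{i\}$ a zero block, contradicting the hypothesis, so each vertex lies in exactly one complete block.
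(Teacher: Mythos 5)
Your proposal is correct and follows essentially the same route as the paper: both reduce $\A$ to a direct sum of irreducible summands via Lemma \ref{lem1}, identify each summand as an all-ones tensor (complete block) via Lemma \ref{lem2}, and obtain the diagonal form of $U^{\top}U$ from the disjointness of the supports of the $\bu_j$. The only difference is organizational (you prove the cycle $(1)\Rightarrow(2)\Rightarrow(3)\Rightarrow(1)$ while the paper proves $(1)\Leftrightarrow(2)$ and $(1)\Leftrightarrow(3)$ separately), and you are somewhat more explicit than the paper about why the $\bcp$ property passes to the irreducible summands.
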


\begin{proof} To prove $(1) \Leftrightarrow (2)$, we first let $\A\in \SF_{m,n}$ be a $\bcp$
tensor.  Then by Lemma \ref{lem1} $\A$ can be written in form (\ref{eq:irredufact}) where each
$\A_{i}$ is an irreducible $\bcp$ tensor of $m$th order $n_{i}$-dimension (no zero block there
since $\A$ has no zero block).  By Lemma \ref{lem2}, $\A_{i}$ is associated with a multi-hypergraph $\cP_{i}=(V_{i}, \capEb_{i})$ where $\abs{V_{i}}=n_{i}$ for $i=1,2,\ldots, q$, $n_{1}+n_{2}+\ldots +n_{q}=n$.  For each $i\in [q]$, by Lemma \ref{lem2},  $\cP_{i}$ is
the complete block of  dimension $n_{i}$ (since $\A_{i}$ is irreducible and  $\bcp$ ). Thus $(1)\Rightarrow (2)$ is proved.  The proof of  $(2)\Rightarrow (1)$ is immediate if we
note that the decomposition (\ref{eq:cpu01}) holds by take $\supp(\bu_{i}) =V_{i}$ for $i=1,2,\ldots, q$. \\
\indent Now we show $(1)\Leftrightarrow (3)$.  First we assume that $\A\in \SF_{m,n}$ is $\bcp$.  Then from the proof of Lemma \ref{lem2} there exist some vectors
$\bu_{j}\in \Fb^{n}$ such that (\ref{eq: cp})  holds, and
\beq\label{eq: disjointsupp}
\supp(\bu_{i})\cap \supp(\bu_{j}) = \emptyset, \forall 1\le i < j\le q
\eeq
It follows that $U^{T}U=diag(n_{1},\ldots, n_{q})$ for $U=[\bu_{1},\ldots,\bu_{q}]$, where $n_{i}$ is the positive integer described above. Thus $(1)\Rightarrow (3)$ is proved.
The other direction can be proved by reversing the above arguments.
\end{proof}

\section*{Acknowledgement}
This research was supported by the Hong Kong Research Grant Council (No. PolyU 501212, 501913, 15302114 and 15300715). The work was partially done during the first two authors' 
visit at the Hong Kong Polytechnic University in August of 2016.

\end{document}